\newtheorem{theorem}{Theorem}[section]
\newtheorem{observation}{Observation}[section]
\newtheorem{lemma}[theorem]{Lemma}
\newtheorem{fact}[theorem]{Fact}
\newtheorem{corollary}[theorem]{Corollary}
\newtheorem{claim}[theorem]{Claim}
\theoremstyle{definition}
\newtheorem{definition}[theorem]{Definition}
\newtheorem{remark}[theorem]{Remark}
\let \restr = \upharpoonright
\let \into = \longrightarrow
\let \sub = \subseteq
\let \elsub = \preccurlyeq
\let \av = \arrowvert
\let \a = \alpha
\let \b = \beta
\let \g = \gamma
\let \d = \delta
\let \l = \lambda
\let \k = \kappa
\let \n = \nu
\let \p = \pi
\let \r = \rho
\let \D = \Delta
\let \x = \xi
\let \o = \omega
\let \al = \aleph
\let \la = \langle
\let \ra = \rangle
\let \mtcl = \mathcal
\let \mtbb = \mathbb
\DeclareMathOperator{\dom}{dom}
\DeclareMathOperator{\cf}{cf}
\DeclareMathOperator{\cl}{cl}
\DeclareMathOperator{\Lim}{Lim}
\DeclareMathOperator{\Succ}{Succ}
\DeclareMathOperator{\Ord}{\textsf{Ord}}
\DeclareMathOperator{\ZFC}{ZFC}
\DeclareMathOperator{\ZF}{ZF}
\DeclareMathOperator{\PFA}{\textsf{PFA}}
\DeclareMathOperator{\BPFA}{\textsf{BPFA}}
\DeclareMathOperator{\FA}{\textsf{FA}}
\DeclareMathOperator{\CH}{\textsf{CH}}
\DeclareMathOperator{\NS}{\textsf{NS}}
\DeclareMathOperator{\WCG}{\textsf{WCG}}
\DeclareMathOperator{\GCH}{\textsf{GCH}}
\DeclareMathOperator{\M}{\textsf{Meas}}
\DeclareMathOperator{\Measuring}{\textsf{Measuring}}
\DeclareMathOperator{\Unif}{\textsf{Unif}}
\title{Few new reals}
\author[D. Asper\'o]{David Asper\'o}
\address{David Asper\'o, School of Mathematics, University of East Anglia, Norwich NR4 7TJ, UK}
\email{d.aspero@uea.ac.uk}
\author[M.A. Mota]{Miguel Angel Mota}
\address{Miguel Angel Mota,  Departamento de Matem\'aticas,
ITAM,
01080, Mexico City, Mexico}
\email{motagaytan@gmail.com}
\thanks{The first author acknowledges support of EPSRC Grant EP/N032160/1.}
\thanks{We thank Tanmay Inamdar, Tadatoshi Miyamoto, Justin Moore, Itay Neeman, Boban Veli\v{c}kovi\'{c}, and Teruyuki Yorioka for their input at various stages in the creation of this article. We also thank the anonymous referee for their careful reading of the paper and their useful comments.}
\date{}
\begin{document}

\subjclass[2010]{03E50, 03E35, 03E05}

\maketitle
\pagestyle{myheadings}\markright{Few new reals}

\begin{abstract}
We introduce a new method for building models of $\CH$, together with $\Pi_2$ statements over $H(\omega_2)$, by forcing. Unlike other forcing constructions in the literature, our construction adds new reals, although only  $\aleph_1$-many of them. Using this approach, we build a model in which a very strong form of the negation of Club Guessing at $\omega_1$ known as $\Measuring$ holds together with $\CH$, thereby answering a well-known question of Moore. This construction can be described as a finite-support weak forcing iteration with side conditions consisting of suitable graphs of sets of models with markers.
The $\CH$-preservation is accomplished through the imposition of copying constraints on the information carried by the condition, as dictated by the edges in the graph.
\end{abstract}

\section{Introduction}
The problem of building models of consequences, at the level of $H(\omega_2)$, of classical forcing axioms in the presence of the Continuum Hypothesis ($\CH$) has a long history, starting with Jensen's landmark result  that Suslin's Hypothesis is compatible with $\CH$ (\cite{Devlin-Johnsbraten}). Much of the work in this area is due to Shelah (see \cite{PIF}), with contributions also by other people (see e.g.\  \cite{Abraham-Todorcevic}, \cite{Eisworth-Roitman}, \cite{M0}, \cite{EMM}, \cite{ALM} or \cite{M}). Most of the work in the area done so far proceeds by showing that some suitable countable support iteration whose iterands are proper forcing notions not adding new reals fails to add new reals also at limit stages.

There are (nontrivial) limitations to what can be achieved in this area. One conclusive example is the main result from \cite{ALM}, which highlights a strong global limitation: There is no model of $\CH$ satisfying a certain mild large cardinal assumption and realizing all $\Pi_2$ statements over the structure $H(\omega_2)$ that can be forced, using proper forcing, to hold together with $\CH$. In fact there are two $\Pi_2$ statements over $H(\omega_2)$, each of which can be forced, using proper forcing, to hold together with $\CH$---for one of them we need an inaccessible limit of measurable cardinals---and whose conjunction implies $2^{\aleph_0}=2^{\aleph_1}$.

The above example is closely tied to the following well-known ob\-sta\-cle to not adding reals, which appears in \cite{Devlin-Shelah} (s.\ also \cite{EMM}) and which is more to the point in the context of this paper:\footnote{We will revisit this obstacle at Subsection 2.2 with the purpose of addressing the following question: why do our methods work with the present application (forcing $\Measuring$) and not with the problem of forcing $\Unif(\vec C)$ (for any given $\vec C$)?} Given a ladder system $\vec C = (C_\delta\,:\,\delta\in \Lim(\omega_1))$ (i.e., each $C_\delta$ is a cofinal subset of $\delta$ of order type $\omega$), let $\Unif(\vec C)$ denote the statement that for every colouring $F:\Lim(\omega_1)\longrightarrow \{0, 1\}$ there is a function $G:\omega_1\longrightarrow   \{0, 1\}$ with the property that for every $\delta\in \Lim(\omega_1)$ there is some $\alpha<\delta$ such that $G(\xi)=F(\delta)$ for all $\xi\in C_\delta\setminus\alpha$ (where, given an ordinal $\alpha$, $\Lim(\alpha)$ is the set of limit ordinals below $\alpha$). We say that $G$ uniformizes $F$ on $\vec C$. Given $\vec C$ and $F$ as above there is a natural forcing notion, let us call it $\mathcal Q_{\vec C, F}$, for adding a uniformizing function for $F$ on $\vec C$ by initial segments. It takes a standard exercise to show that $\mathcal Q_{\vec C, F}$ is proper, adds the intended uniformizing function, and does not add reals. However, any long enough iteration of forcings of the form $\mathcal Q_{\vec C, F}$, even with a fixed $\vec C$, will necessarily add new reals. As a matter of fact, the existence of a ladder system $\vec C$ for which $\Unif(\vec C)$ holds cannot be forced together with $\CH$ in any way whatsoever, as this statement actually implies $2^{\aleph_0}=2^{\aleph_1}$. The argument is well-known and may be found for example in \cite{Devlin-Shelah} and in \cite{EMM}.

In the present paper we distance ourselves from the tradition of i\-te\-ra\-ting forcing without adding reals and tackle the problem of building interesting models of $\CH$ with an entirely different approach: starting with a model of $\CH$, we build a forcing which adds new reals,\footnote{As it turns out, the construction resembles a classical finite support iteration, and in fact it adds Cohen reals.} albeit only $\aleph_1$-many of them.

In \cite{forcing-conseqs}, a framework for building finite support forcing iterations incorporating systems of countable models as side conditions was developed (see also \cite{cons-club-guessing-failure},  \cite{genMA},  \cite{separating-fat} for further elaborations). These iterations arise naturally in, for example, situations in which one is in\-te\-res\-ted in building a forcing iteration of length $\kappa$ (where $\kappa$ is relatively long) which is proper and which, in addition, does not collapse cardinals.\footnote{For example if, as in \cite{forcing-conseqs}, we want to force certain instances of the Proper Forcing Axiom ($\PFA$) together with $2^{\aleph_0}=\kappa>\aleph_2$.} Much of what we will say in the next few paragraphs will probably make sense  only to readers with at least some familiarity with the framework as presented, for example, in \cite{forcing-conseqs}.

In the situations we are referring to here, one typically aims at a construction which in fact has the $\aleph_2$-chain condition, and in order to achieve this goal it is natural to build the iteration in such a way that conditions be of the form $(F, \Delta)$, for $F$ a (finitely supported) $\kappa$-sequence of working parts, and with $\Delta$ being a set of models with markers, i.e., a set of ordered pairs $(N, \rho)$, where $N$ is a countable elementary submodel of $H(\kappa)$, possibly enhanced with some predicate $T\subseteq H(\kappa)$, and where $\rho\in N\cap\kappa$. $N$ is one of the models for which we will try to `force' each working part $F(\alpha)$, for every stage $\alpha\in N\cap\rho$, to be generic for the generic extension of $N$ up to that stage; thus, $\rho$ is to be seen as a `marker' that tells us up to which point is $N$ to be seen as `active' as a side condition.

In order for the construction to have the $\aleph_2$-chain condition and be proper, it is often necessary to start from a model of $\CH$ and require that the domain of $\Delta$ be a set of models with suitable symmetry properties. We call (finite) sets of models having these properties $T$-symmetric systems (for a fixed $T\subseteq H(\kappa)$). One of these properties, and the one on which we will focus our attention in a moment, is the following: In a $T$-symmetric system $\mathcal N$, if $N$ and $N'$ are both in $\mathcal N$ and $N\cap\omega_1=N'\cap\omega_1$, then there is a (unique) isomorphism $\Psi_{N, N'}$ between the structures $(N; \in, T, \mtcl N\cap N)$ and $(N'; \in, T, \mtcl N\cap N')$ which, moreover, is the identity on $N\cap N'$.

At this point one could as well take a step back and analyse the pure side condition forcing $\mtcl P_0$ by itself. This forcing $\mtcl P_0$, which we can naturally see as the first stage of our construction, consists of all finite $T$-symmetric systems of submodels, ordered by reverse inclusion. $\mtcl P_0$ first appeared in the literature in \cite{Todor}. It is a relatively well-known fact,  and was noted in \cite{separating-fat},\footnote{See also \cite{KT}.} that forcing with $\mtcl P_0$ adds Cohen reals, although not too many; in fact it adds exactly $\aleph_1$-many of them. This may be somewhat surprising given that $\mtcl P_0$ adds, by finite approximations, a new rather large object (a symmetric system covering all of $H(\kappa)^V$).\footnote{Incidentally, $\mtcl P_0$ is in fact strongly proper, and so each new real it adds is in fact contained in an extension of $V$ by some Cohen real. The preservation of $\CH$ by $\mtcl P_0$ was exploited in \cite{Mota-Krueger}.} The argument for this is contained in the proof of Lemma \ref{fnr-0} from the present paper, but it will nonetheless be convenient at this point to sketch it here.

Let us assume, towards a contradiction, that $\CH$ holds and there is a sequence $(\dot r_\nu)_{\nu<\omega_2}$ of $\mtcl P_0$-names which some condition $\mtcl N$ forces to be distinct subsets of $\omega$. Without loss of generality we may take each $\dot r_\nu$ to be a member of $H(\kappa)$. For each $\nu$ we can pick $N_\nu$ to be a sufficiently correct countable model---meaning that $(N_\nu; \in, T^*)\elsub (H(\k); \in, T^*)$ for a suitably expressive predicate $T^*\sub H(\k)$---containing all relevant objects, which in this case includes $\mtcl N$ and  $\dot r_\nu$. As $\CH$ holds, we may find distinct indices $\nu$ and $\nu'$ such that there is a unique isomorphism $\Psi_{N_\nu, N_{\nu'}}$ between the structures $(N_\nu; \in, T^\ast, \mtcl N, \dot r_\nu)$ and $(N_{\nu'}; \in, T^\ast, \mtcl N, \dot r_{\nu'})$ fixing $N_\nu\cap N_{\nu'}$. But then $\mtcl N^\ast = \mtcl N\cup\{N_\nu, N_{\nu'}\}$ is a condition in $\mtcl P_0$ forcing that $\dot r_\nu = \dot r_{\nu'}$. The point is that if $n\in \omega$ and $\mtcl N'$ is any condition extending $\mtcl N^\ast$ and forcing $n\in \dot r_\nu$, then $\mtcl N'$ is in fact compatible with a condition $\mathcal M\in N_\nu$ forcing the same thing. This is true since $\mathcal N^\ast$ is an $(N_\nu, \mathcal P_0)$-generic condition. But then $\Psi_{N_\nu, N_{\nu'}}(\mathcal M)$ is a condition forcing $n\in \Psi_{N_\n, N_{\n'}}(\dot r_\n)=\dot r_{\n'}$ (since, by taking $T^*$ expressive enough, we may assume the forcing relation for $\mtcl P_0$ to be definable in $(H(\k); \in, T^\ast)$ without parameters). Finally, if $\mtcl N''$ is any common extension of $\mtcl N'$ and $\mtcl M$, then $\mathcal N''$ forces also that $n\in \dot r_{\n'}$, since it extends $\Psi_{N_\n, N_{\n'}}(\mathcal M)$ as $\Psi_{N_\n, N_{\n'}}(\mathcal M)\subseteq \mathcal N''$ by the symmetry requirement.\footnote{It is worth noticing the resemblance of this argument with Shelah's argument for showing that $\CH$ gets preserved by the limit of any countable support iteration of length less than $\o_2$ of proper forcings of size at most $\aleph_1$ (s.\ e.g.\ the proof of   \cite[Theorem 2.10]{abraham}.)}

$\mathcal P_0$ has received some attention in the literature. For example, Todor\-\v{c}e\-vi\'{c} proved that $\mtcl P_0$ adds a Kurepa tree (s.\ \cite{KT}). Also, \cite{KT} presents a mild variant of $\mtcl P_0$ which not only preserves $\CH$ but actually forces $\diamondsuit$.

The iterations with symmetric systems of models as side conditions that we were referring to before do not preserve $\CH$, and in fact they force $2^{\aleph_0}=\kappa>\aleph_1$. The reason is of course that there are no symmetry requirements on the working parts. Actually, even if the first stage of the iterations---which is, essentially, $\mtcl P_0$---preserves $\CH$, the iterations are in fact designed to add new reals at all later (successor) stages.

Something one may naturally envision at this point is the possibility to build a suitable forcing with systems of models (with mar\-kers) as side conditions while strengthening the symmetry constraints, so as to make them apply not only to the side condition part of the forcing but also to the working parts; one would hope to exploit the above idea in order to show that the forcing thus constructed preserves $\CH$, and would of course like to be able to do that while at the same time forcing some interesting statement. In the present paper we implement this idea by proving that a very strong form of the failure of Club Guessing at $\omega_1$ known as $\Measuring$ (see \cite{EMM}) that follows from $\PFA$ can be forced adding new reals while, nevertheless, preserving $\CH$.

\begin{definition} $\Measuring$ holds if and only if for every sequence $\vec C=(C_\delta\,:\,\delta\in \omega_1)$, if each $C_\delta$ is a closed subset of $\delta$ in the order topology, then there is a club $C\sub\omega_1$ such that for every $\delta\in C$ there is some $\alpha<\delta$ such that either

\begin{itemize}
\item $(C \cap\delta)\setminus\alpha\subseteq C_\delta$, or
\item $(C\setminus\alpha)\cap C_\delta=\emptyset$.
\end{itemize}
\end{definition}

In the above definition, we say that \emph{$C$ measures $\vec C$.}  $\Measuring$ is of course equivalent to its restriction to club-sequences $\vec C$ on $\omega_1$, i.e., to sequences of the form $\vec C=(C_\delta\,:\,\delta\in \Lim(\omega_1))$, where each $C_\delta$ is a club of  $\delta$. It is also not difficult to see that $\Measuring$ can be rephrased as the assertion that the algebra $\mtcl P(\o_1)/\NS_{\o_1}$---where $\NS_{\o_1}$ denotes the nonstationary ideal on $\o_1$---forces that $\mtcl C_{\o_1}^V$ is a base for an ultrafilter on the Boolean subalgebra of $\mtcl P(\o_1^V)$ generated by the closed sets as computed in the generic ultrapower $M=V/\dot G$, where $\mtcl C_{\o_1}^V$ denotes the club filter on $\o_1$ in $V$.

A partial order $\mtbb P$ is $\al_2$-Knaster if for every sequence $(q_\x\,:\,\x<\o_2)$ of $\mtbb P$-conditions there is a set $I\sub\o_2$ of cardinality $\al_2$ such that $q_\x$ and $q_{\x'}$ are compatible for all $\x$, $\x'\in I$. Of course, every $\al_2$-Knaster partial order has the $\al_2$-chain condition.

Our main theorem is the following.

\begin{theorem}\label{mainthm-0-intro} ($\CH$) Let $\kappa\geq\omega_2$ be a regular cardinal such that $2^{{<}\k}=\k$.
Then there is a partial order $\mtcl P\sub H(\k)$ with the following properties.
\begin{enumerate}
\item $\mtcl P$ is proper.
\item $\mtcl P$ is $\aleph_2$-Knaster.
\item $\mtcl P$ forces the following statements.
\begin{enumerate}
\item $\Measuring$
\item $\CH$
\item $2^{\aleph_1}=\k$
\end{enumerate}
\end{enumerate}
\end{theorem}

Theorem \ref{mainthm-0-intro} answers a question of Moore, who asked if $\Measuring$ is compatible with $\CH$ (see \cite{EMM} or \cite{Moore2}). The relative consistency of $\Measuring$ with $\CH$ has also been obtained recently by Golshani and Shelah in \cite{Golshani-Shelah}, where they have actually shown that every countable support iteration of the natural proper posets for adding a club of $\omega_1$ measuring a given club-sequence by countable approximations fails to add new reals.\footnote{It is straightforward to see that these natural forcings for adding a given instance of $\Measuring$ do not add reals; however, before \cite{Golshani-Shelah} it was not known whether their countable support iterations also (consistently) have this property.} Prior to \cite{Golshani-Shelah}, the strongest failures of Club Guessing at $\omega_1$ known to be within reach of the forcing iteration methods for producing models of $\CH$ without adding new reals (s.\ \cite{NNR}) were only in the region of the negation of weak Club Guessing at $\omega_1$, $\lnot\WCG$, which is the statement that for every ladder system $(C_\delta\,:\,\delta\in\Lim(\omega_1))$ there is a club $C\subseteq \omega_1$ having finite intersection with each $C_\delta$.\footnote{$\Measuring$  implies $\lnot\WCG$. To see this, suppose $(C_\delta\,:\,\delta\in\Lim(\o_1))$ is a ladder system and $D\subseteq\o_1$ is a club measuring it. Then every limit point $\d\in D$ of limit points of $D$ is such that $D\cap C_\delta$ is bounded in $\delta$ since no tail of $D\cap\delta$ can possibly be contained in $C_\delta$ as $C_\delta$ has order type only $\omega$.}  Moore, upon learning about an earlier version of Theorem \ref{mainthm-0-intro}, asked whether $\Measuring$ implies that there are non-constructible reals. This question was aimed at addressing the issue whether or not adding new reals is a ne\-ce\-ssary feature of any successful approach to forcing $\Measuring$ + $\CH$, and it obtains a negative answer by the Golshani-Shelah result.

Our construction is a sequence $\langle\mtcl P_\b\,:\,\b\leq\k\rangle$ which is not a forcing iteration, in the usual sense of $\mtcl P_\a$ being a complete suborder of $\mtcl P_\b$ for all $\a<\b\leq\k$, but which nevertheless has a sufficiently nice property; it is what we will refer to as a \emph{weak forcing iteration}. This means that for all $\a<\b$, every $\mtcl P_\a$-condition is a $\mtcl P_\b$-condition, for all $p_0$, $p_1\in\mtcl P_\a$, if $p_1\leq_{\mtcl P_\a} p_0$, then $p_1\leq_{\mtcl P_\b} p_0$,\footnote{Although it not be the case that if $p_1\leq_{\mtcl P_\b} p_0$, then $p_1\leq_{\mtcl P_\a} p_0$. In other words, $\mtcl P_\a$ need not be a suborder of $\mtcl P_\b$.} and, moreover, every predense subset of $\mtcl P_\a$ is also predense in $\mtcl P_\b$.
 Using this piece of terminology, our construction can be roughly described as a finitely supported weak forcing iteration $\langle \mtcl P_\b\,:\,\b\leq\k\rangle$ in which conditions come together with a side condition consisting of a graph of edges $\{(N_0, \r_0), (N_1, \r_1)\}$, where each $(N_i, \r_i)$ is a model with markers, with suitable structural properties. Given any such edge $\{(N_0, \r_0), (N_1, \r_1)\}$, $N_0\cong N_1$. Furthermore, all the information carried by the condition---including both its working part and its side condition---contained in $N_0$ and attached to any $\alpha\in N_0\cap\r_0$ such that $\Psi_{N_0, N_1}(\a)<\r_1$ (where $\Psi_{N_0, N_1}$ is the unique isomorphism between $(N_0; \in)$ and $(N_1; \in)$) is to be copied over into $N_1$ by $\Psi_{N_0, N_1}$. This copying will be crucially used in the proof of $\CH$-preservation\footnote{See also \cite{A-G} for another forcing construction using edges in order to preserve $\GCH$.} and also in other parts of the proof of Theorem \ref{mainthm-0-intro} (most notably in the proof of the $\al_2$-chain condition). The working part consists of conditions for natural forcing notions adding instances of $\Measuring$.

Rather than delving into more details here, we direct the reader to the actual construction in Section \ref{first-construction}.

\subsection{Some observations on extensions of $\Measuring$}

We conclude this introduction by briefly considering some extensions of $\Measuring$.

It is immediate to see that $\Measuring$ is equivalent to the statement that if $(\mathcal C_\delta\,:\,\delta\in\Lim(\omega_1))$ is such that each $\mathcal C_\delta$ is a countable collection of closed subsets of $\delta$, then there is a club of $\omega_1$ measuring all members of $\mathcal C_\delta$ for each $\delta$.
We may thus consider the following family of strengthenings of $\Measuring$.

\begin{definition} Given a cardinal $\kappa$, $\M_{\kappa}$ holds if and only if for every family $\mathcal C$ consisting of closed subsets of $\omega_1$ and such that $\arrowvert\mathcal C\arrowvert\leq\kappa$ there is a club $C\subseteq \omega_1$ with the property that for every $D\in\mathcal C$ and every $\delta\in C$ there is some $\alpha<\delta$ such that either

 \begin{itemize}
\item $(C \cap\delta)\setminus\alpha\subseteq D$, or
\item $((C\cap\d)\setminus\alpha)\cap D=\emptyset$.
\end{itemize}
\end{definition}

$\M_{\aleph_0}$ is trivially true in $\ZFC$. Also, it is clear that $\M_{\kappa}$ implies $\M_{\lambda}$ whenever $\lambda<\kappa$, and that $\M_{\aleph_1}$ implies $\Measuring$.

Recall that the splitting number, $\mathfrak s$, is the minimal cardinality of a splitting family, i.e., of a collection $\mathcal X\sub[\omega]^{\aleph_0}$ such that for every $Y\in[\omega]^{\aleph_0}$ there is some $X\in\mathcal X$ such that $X\cap Y$ and $Y\setminus X$ are both infinite.

In the proof of Fact \ref{fact-on-s}, if $(C_\d\,:\,\d\in\Lim(\o_1))$ is a ladder system on $\o_1$, we write $(C_\delta(n))_{n<\omega}$ to denote the strictly increasing enu\-me\-ration of $C_\delta$. Also, $[\alpha,\,\beta)=\{\xi\in \Ord\,:\,\alpha\leq\xi<\beta\}$ for all ordinals $\alpha\leq \beta$.

\begin{fact}\label{fact-on-s} $\M_{\mathfrak s}$ is false.
\end{fact}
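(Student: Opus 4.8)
The idea is to use a splitting family to sabotage any candidate measuring club. Fix a splitting family $\mtcl X=\{X_\alpha\,:\,\alpha<\mathfrak s\}\subseteq[\omega]^{\aleph_0}$. I would like to attach to each $X_\alpha$ a closed subset of $\omega_1$ in such a way that no single club $C\subseteq\omega_1$ can measure all of them simultaneously. The natural construction is to first fix, once and for all, a ladder system $(C_\delta\,:\,\delta\in\Lim(\omega_1))$; then, for each $\alpha<\mathfrak s$, let $D_\alpha=\bigcup\{\,[C_\delta(2n),\,C_\delta(2n+1)]\,:\,\delta\in\Lim(\omega_1),\ n\in X_\alpha\,\}$ (possibly after also throwing in the limit points $\delta$ themselves so that each $D_\alpha$ is genuinely closed). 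Each $D_\alpha$ is then a closed subset of $\omega_1$, and the family $\mtcl C=\{D_\alpha\,:\,\alpha<\mathfrak s\}$ has size $\mathfrak s$, so it is legitimate input for $\M_{\mathfrak s}$.

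Now suppose toward a contradiction that $C\subseteq\omega_1$ is a club measuring every $D_\alpha$. Consider a limit point $\delta\in C$ which is itself a limit of points of $C$; for such $\delta$ the set $C\cap\delta$ is unbounded in $\delta$, hence for every $\alpha$ the measuring alternative forces a tail of $C\cap\delta$ to be either entirely inside $D_\alpha$ or entirely outside $D_\alpha$. The plan is to read off from this dichotomy a subset $Y_\delta\in[\omega]^{\aleph_0}$ coding "which of the intervals $[C_\delta(2n),C_\delta(2n+1)]$ meet a tail of $C\cap\delta$" — roughly, $n\in Y_\delta$ iff $C\cap\delta$ has points in $[C_\delta(2n),C_\delta(2n+1)]$ cofinally — and to observe that, because $C\cap\delta$ is unbounded in $\delta$ while each interval is bounded, $Y_\delta$ is infinite, and so is its complement. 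Then for this $Y_\delta$ there is, by the splitting property, some $\alpha<\mathfrak s$ with $X_\alpha\cap Y_\delta$ and $Y_\delta\setminus X_\alpha$ both infinite. Tracing through the definition of $D_\alpha$, having $X_\alpha\cap Y_\delta$ infinite means a tail of $C\cap\delta$ repeatedly enters $D_\alpha$, while $Y_\delta\setminus X_\alpha$ infinite means it repeatedly leaves $D_\alpha$ — contradicting the measuring alternative for $D_\alpha$ at $\delta$.

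The main obstacle I anticipate is not the counting argument but the bookkeeping needed to make $Y_\delta$ genuinely encode the behaviour of $C\cap\delta$ against $\mtcl C$, and in particular the correct management of the "$\alpha$" parameters (the "there is some $\alpha<\delta$" thresholds) from the definition of $\M_{\mathfrak s}$: one must choose $\delta$ and the relevant tails carefully enough that a single tail of $C\cap\delta$ simultaneously witnesses the dichotomy for the specific $D_\alpha$ produced by the splitting family, which only becomes available after $Y_\delta$ is fixed. The cleanest way around this is to first pin down $\delta$ and $Y_\delta$ using only $C$ and the fixed ladder system (no reference to $\mtcl C$ yet), then invoke splitting to get $\alpha$, and only then derive the contradiction from the measuring clause for that one $D_\alpha$. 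One should also double-check that the limit points $\delta$ of points of $C$ are themselves in $C$ (they are, since $C$ is closed) and that there are club-many of them, so that the choice of such $\delta$ is unproblematic. Finally, a minor point to verify is that adding the limit ordinals $\delta$ into $D_\alpha$ to ensure closedness does not interfere with the argument, since those points lie at the sup of $\delta$ and play no role in the behaviour of a tail of $C\cap\delta$ below $\delta$.
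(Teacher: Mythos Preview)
Your overall plan---ladder system plus splitting family plus intervals---is exactly the right idea, but the specific way you assemble the closed sets does not work.

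The problem is in your definition of $D_\alpha=\bigcup\{\,[C_\delta(2n),C_\delta(2n+1)]\,:\,\delta\in\Lim(\omega_1),\ n\in X_\alpha\,\}$, taken as a \emph{union over all} $\delta$. Since $X_\alpha$ is infinite, for every limit ordinal $\delta$ the intervals $[C_\delta(2n),C_\delta(2n+1)]$ with $n\in X_\alpha$ are cofinal in $\delta$, so $\delta$ is an accumulation point of $D_\alpha$. Hence the closure of $D_\alpha$ contains \emph{every} limit ordinal. But then the club $C=\Lim(\omega_1)$ is entirely contained in each $\overline{D_\alpha}$, and therefore $C$ trivially measures the whole family $\{\overline{D_\alpha}:\alpha<\mathfrak s\}$. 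Your family does not witness the failure of $\M_{\mathfrak s}$. Your final remark that ``adding the limit ordinals $\delta$ into $D_\alpha$ \ldots\ play[s] no role in the behaviour of a tail of $C\cap\delta$ below $\delta$'' is precisely where this goes wrong: the limit ordinals you are forced to add are not just the single $\delta$ you are analysing, but \emph{all} limit ordinals below $\delta$ as well, and those certainly affect tails of $C\cap\delta$.

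There is also a second, independent problem: even ignoring closure, a point $\xi\in[C_\delta(2n),C_\delta(2n+1)]$ with $n\notin X_\alpha$ may still lie in $D_\alpha$ via an interval coming from some other $\delta'$, so ``$Y_\delta\setminus X_\alpha$ infinite'' does not give you cofinally many points of $C\cap\delta$ outside $D_\alpha$.

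The fix, which is what the paper does, is to \emph{not} take the union over $\delta$: instead, for each pair $(X,\delta)$ with $X\in\mathcal X$ and $\delta\in\Lim(\omega_1)$, form a separate closed set $Z^X_\delta=\bigcup\{[C_\delta(n),C_\delta(n+1)):n\in X\}\cup\{\delta\}$ (taking each $C_\delta(n)$ to be a successor ordinal so these half-open intervals are genuinely closed). This gives $\aleph_1\cdot\mathfrak s=\mathfrak s$ closed sets (using $\mathfrak s\geq\aleph_1$), each one bounded and involving only a single ladder, so there is no interference between different $\delta$'s and the splitting argument goes through cleanly at any limit point of the candidate club.
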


\begin{proof}
Let $\mathcal X\subseteq[\omega]^{\aleph_0}$ be a splitting family. Let $(C_\delta)_{\delta\in \Lim(\omega)}$ be a ladder system on $\omega_1$ such that $C_\d(n)$ is a successor ordinal for each $\d\in\Lim(\o_1)$ and $n<\o$, and let $\mathcal C$ be the collection of all sets of the form $$Z^X_\delta=\bigcup \{[C_\delta(n),\,C_\delta(n+1))\,:\,n\in X\}\cup\{\delta\}$$ for some $\delta\in\Lim(\omega_1)$ and $X\in\mathcal X$.  Let $D$ be a club of $\omega_1$, let $\delta<\omega_1$ be a limit point of $D$, and let $$Y=\{n<\omega\,:\,[C_\delta(n),\,C_\delta(n+1))\cap D\neq\emptyset\}$$ Let $X\in\mathcal X$ be such that $X\cap Y$ and $Y\setminus X$ are infinite. Then $Z^X_\delta\cap D$ and $D\setminus Z^X_\delta$ are both cofinal in $\delta$.  Hence, $D$ does not measure $\mathcal C$.
\end{proof}

The following is proved in joint work of the first author with John Krueger.

\begin{theorem} (\cite{Aspero-Krueger}) $\M_{\al_1}$ can be forced over any model of $\ZFC$ and follows from $\BPFA$. \end{theorem}

Another natural way to strengthen $\Measuring$ is to allow, in the sequence to be measured, not just closed sets, but also sets of higher complexity (from a descriptive set-theoretic point of view). The version of $\Measuring$ where one considers sequences $\vec X=(X_\d\,:\,\d\in\Lim(\o_1))$, with each $X_\d$ an open subset of $\d$ in the order topology, is of course equivalent to $\Measuring$. A natural next step would therefore be to consider sequences in which each $X_\d$ is a countable union of closed sets. This is of course the same as allowing each $X_\d$ to be an arbitrary subset of $\d$. Let us call the corresponding statement $\Measuring^\ast$:

\begin{definition} $\Measuring^\ast$ holds if and only if for every sequence $\vec X=(X_\d\,:\,\d\in\Lim(\o_1))$, if $X_\d\sub\d$ for all $\d$, then there is some club $C\sub\o_1$ such that for every $\d\in C$, a tail of $C\cap\d$ is either contained in or disjoint from $X_\d$.
  \end{definition}

It is easy to see that
$\Measuring^\ast$  is false in $\ZFC$.
As a matter of fact, given a stationary and co-stationary $S\sub\o_1$, there is no club of $\o_1$ measuring $\vec X=(S\cap\d\,:\,\d\in\Lim(\o_1))$. In fact, if $C$ is any club of $\o_1$, then both $C\cap S\cap\d$ and $(C\cap\d)\setminus S$ are cofinal subsets of $\d$ for each $\d$ in the club of limit points in $\o_1$ of both $C\cap S$ and $C\setminus S$.

The status of $\Measuring^\ast$ is more interesting in the absence of the Axiom of Choice. Let $\mtcl C_{\o_1}=\{X\sub\o_1\,:\,C\sub X\mbox{ for some club $C$ of $\o_1$}\}$.

\begin{observation}  ($\ZF$+ $\mtcl C_{\o_1}$ is a normal filter on  $\o_1$)
Suppose $\vec X=(X_\d\,:\,\d\in\Lim(\o_1))$ is such that

\begin{enumerate}
 \item  $X_\d\sub\d$ for each $\d$.
 \item For each club $C\sub\o_1$,

 \begin{enumerate}

 \item there is some $\d\in C$ such that $C\cap X_\d\neq\emptyset$, and

 \item there is some $\d\in C$ such that $(C\cap\d)\setminus X_\d\neq\emptyset$.

\end{enumerate}

\end{enumerate}

Then there is a stationary and co-stationary subset of $\o_1$ definable from $\vec X$.

\end{observation}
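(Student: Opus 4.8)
The plan is to construct a stationary, co-stationary set by reading off, from $\vec X$, a single set of countable ordinals whose relationship with clubs is sufficiently unstable. The natural candidate is
\[
S = \{\d\in\Lim(\o_1)\,:\,\sup(X_\d)=\d\},
\]
i.e., the set of $\d$ for which $X_\d$ is cofinal in $\d$, together perhaps with a correction term. The idea is that hypothesis (2a), applied to arbitrary clubs, will force $S$ (or rather, its intersection with any club) to be nonempty in a persistent way, and dually for $S^c$ and hypothesis (2b). Note that $S$ is definable from $\vec X$ without parameters, as required. First I would try to show $S$ is stationary: given a club $C$, I want to find $\d\in C\cap S$. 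Build inside $C$ an increasing $\o$-sequence $\d_0<\d_1<\cdots$ by repeatedly applying (2a): having $\d_n$, the club $C\setminus(\d_n+1)$ has, by (2a), some member $\d'$ with $C\cap X_{\d'}\neq\emptyset$; but this only says $X_{\d'}$ meets $C$ above $\d_n$, not that $X_{\d'}$ is unbounded in $\d'$, so the naive diagonalization does not immediately land in $S$.

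The fix, and the main technical point, is to iterate the hypothesis \emph{inside} a well-chosen subclub so that the witnesses accumulate. The key observation is that (2a) and (2b) are hereditary: if $C'\sub C$ are clubs, then (2a) for $C'$ gives a $\d\in C'\sub C$ with $C'\cap X_\d\neq\emptyset$, hence $C\cap X_\d\neq\emptyset$. So I would run the following construction. Fix a club $C$ in which I want to find a point of $S$. Using that $\mtcl C_{\o_1}$ is a normal filter (so closed under countable intersections and diagonal intersections), build a decreasing sequence of clubs or, better, directly build a single increasing continuous sequence $(\d_\a)_{\a<\o_1}$ enumerating a subclub $C^\ast\sub C$ of order type $\o_1$ such that: for every $\a$, there is $\b>\a$ with $X_{\d_\b}\cap\{\d_\g\,:\,\g<\b\}\neq\emptyset$ — obtained by applying (2a) to the tail of $C^\ast$ past $\d_\a$ — and moreover the chosen witness in $X_{\d_\b}$ is of the form $\d_\g$ for some $\g$ in a prescribed cofinal set. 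Running this bookkeeping for $\o$ steps below a limit ordinal $\d = \d_\l$ (with $\l<\o_1$ a limit) forces $X_{\d_\l}$ to be cofinal in $\d_\l$, i.e., $\d_\l\in S$, while $\d_\l\in C$. The dual argument with (2b) shows that $\{\d\,:\,(\d\setminus X_\d)\text{ is cofinal in }\d\}$ is stationary; since this latter set is contained in $S^c\cup(\text{non-unbounded-}X_\d\text{ part})$, one gets co-stationarity of $S$ after possibly replacing $S$ by the cleaner set $\{\d\,:\,X_\d$ is cofinal in $\d$ and $\d\setminus X_\d$ is bounded in $\d\}$ — but here one must check the definition still works; alternatively just take $S$ as above and argue $S^c$ is stationary directly via a symmetric construction using (2b).

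The hard part will be organizing the simultaneous bookkeeping so that a \emph{single} limit stage $\d_\l$ receives infinitely many witnesses from $X_{\d_\l}$ itself (not from various different $X_{\d_\b}$), since (2a) as stated hands us a witness for \emph{some} $\d$ in the club, with no control over which. The trick is to pass, at stage $n$, to the club $C^\ast$ restricted below a reflecting point and use normality of $\mtcl C_{\o_1}$ to thin out so that the "some $\d$" produced is forced to be exactly the limit point $\d_\l$ we are targeting; concretely, one builds a $\sub$-decreasing $\o$-chain of clubs $C = C_0\sup C_1\sup\cdots$ with $\min(C_{n+1}\setminus\xi)$ carrying an $X$-witness above any prescribed level, takes $\d$ to be a limit point of $\bigcap_n C_n$ (which exists and is a club by countable closure of $\mtcl C_{\o_1}$), and checks that $\d\in C$ and $X_\d$ is cofinal in $\d$. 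Everything else — definability of $S$ from $\vec X$, the reduction of co-stationarity to hypothesis (2b), and the verification that $S$ as defined is genuinely a set of countable ordinals — is routine bookkeeping that I would dispatch briefly.
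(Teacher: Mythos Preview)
Your candidate set $S=\{\d:\sup X_\d=\d\}$ does not work, and the gap is not merely in the bookkeeping. Take $X_\d=S_0\cap\d$ for some fixed stationary co-stationary $S_0\sub\o_1$. Hypotheses (2a) and (2b) hold (since $S_0$ and its complement each meet every club), yet $X_\d$ is cofinal in $\d$ for \emph{club}-many $\d$, so your $S$ contains a club and is not co-stationary. Your proposed repair --- passing to $\{\d: X_\d$ cofinal and $\d\setminus X_\d$ bounded$\}$ --- is nonstationary in the same example, since $\d\setminus S_0$ is also cofinal in $\d$ on a club. The underlying error is the claim that the ``dual'' set $\{\d:\d\setminus X_\d$ cofinal$\}$ is essentially $S^c$; in fact both sets can contain a common club. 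Separately, the diagonalization you sketch for stationarity of $S$ never explains how the witnesses from (2a), which concern $X_{\d'}$ for \emph{varying} $\d'$, get pinned to the single limit $\d$ you produce; they do not.

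The paper's argument is structurally different and avoids these issues. It splits into two cases according to whether, for some $\a<\o_1$, the set $W_\a^1=\{\d:\a\in X_\d\}$ is stationary and co-stationary. If so, the least such $W_\a^1$ is the desired set. If not, then for every $\a$ one of $W_\a^0,W_\a^1$ lies in $\mtcl C_{\o_1}$; taking the diagonal intersection $W^\ast$ of the chosen ones (this is where normality is used) yields a club on which the $X_\d$ are coherent, i.e., $X_{\d_0}=X_{\d_1}\cap\d_0$ for $\d_0<\d_1$ in $W^\ast$. Then $S=\bigcup_{\d\in W^\ast}X_\d$ is stationary by (2a) and co-stationary by (2b), and is definable from $\vec X$. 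In the example above this recovers $S_0$ itself.
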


\begin{proof}
We have two possible cases. The first case is when for all $\a<\o_1$, either

\begin{itemize}

\item  $W_\a^0=\{\d<\o_1\,:\,\a\notin X_\d\}$ is in $\mtcl C_{\o_1}$, or

\item $W_\a^1=\{\d<\o_1\,:\,\a\in X_\d\}$ is in $\mtcl C_{\o_1}$.

\end{itemize}

For each $\a<\o_1$ let $W_\a$ be $W_\a^\epsilon$ for the unique $\epsilon\in\{0, 1\}$ such that $W_\a^\epsilon\in \mtcl C_{\o_1}$, and let $W^\ast=\Delta_{\a<\o_1} W_\a\in\mtcl C_{\o_1}$. Then $X_{\d_0}=X_{\d_1}\cap\d_0$ for all $\d_0<\d_1$ in $W^\ast$. It then follows, by (2), that $S=\bigcup_{\d\in W^\ast}X_\d$, which of course is definable from $\vec X$, is a stationary and co-stationary subset of $\o_1$. Indeed, suppose $C\sub\o_1$ is a club, and let us fix a club $D\sub W^\ast$. There is then some $\d\in C\cap D$ and some $\a\in C\cap D\cap X_\d$. But then $\a\in S$ since $\d\in W^\ast$ and $\a\in W^\ast\cap X_\d$.   There is also some $\d\in C\cap D$ and some $\a\in C\cap D$ such that $\a\notin X_\d$, which implies that $\a\notin S$ by a symmetrical argument, using the fact that $X_{\d_0}=X_{\d_1}\cap\d_0$ for all $\d_0<\d_1$ in $W^\ast$.

The second possible case is that in which there is some $\a<\o_1$ with the property that both $W^0_\a$ and $W^1_\a$ are stationary subsets of $\o_1$. But now we can let $S$ be $W^0_\a$, where $\a$ is first such that $W^0_\a$ is stationary and co-stationary.
\end{proof}

It is worth comparing the above observation with Solovay's classic result that an $\o_1$-sequence of pairwise disjoint stationary subsets of $\o_1$ is definable from any given ladder system on $\o_1$ (working in the same theory).

\begin{corollary} ($\ZF$+ $\mtcl C_{\o_1}$ is a normal filter on  $\o_1$) The following are equivalent.

\begin{enumerate}

\item $\mtcl C_{\o_1}$ is an ultrafilter on $\o_1$.

\item $\Measuring^\ast$

\item For every sequence $(X_\d\,:\,\d\in\Lim(\o_1))$, if $X_\d\sub\d$ for each $\d$, then there is a club $C\sub\o_1$ such that either

\begin{itemize}

\item $C\cap\d\sub X_\d$ for every $\d\in C$, or
\item $C\cap X_\d=\emptyset$ for every $\d\in C$.
\end{itemize}
\end{enumerate}

\end{corollary}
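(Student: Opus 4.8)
The plan is to prove the three equivalences in a cycle, working throughout in $\ZF$ together with the assumption that $\mtcl C_{\o_1}$ is a normal filter on $\o_1$. The implication $(2)\Rightarrow(1)$ is the one that makes essential use of the preceding Observation, so I would start by getting the two easy implications out of the way. For $(3)\Rightarrow(2)$: if $C$ witnesses the strong alternative of $(3)$ for a given sequence $\vec X$, then in particular, for each $\d\in C$, the whole of $C\cap\d$ is contained in $X_\d$ (in the first case) or disjoint from $X_\d$ (in the second case), so a fortiori a tail of $C\cap\d$ is contained in or disjoint from $X_\d$; hence $C$ measures $\vec X$ in the sense of $\Measuring^\ast$. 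So $(3)\Rightarrow(2)$ is immediate.

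For $(1)\Rightarrow(3)$: assume $\mtcl C_{\o_1}$ is an ultrafilter and let $(X_\d\,:\,\d\in\Lim(\o_1))$ be given with $X_\d\sub\d$. I would like to reuse the machinery from the Observation's proof. Run the same first-case analysis: for each $\a<\o_1$, exactly one of $W^0_\a=\{\d\,:\,\a\notin X_\d\}$, $W^1_\a=\{\d\,:\,\a\in X_\d\}$ lies in $\mtcl C_{\o_1}$ — this is now automatic, since these two sets partition a tail of $\o_1$ (all $\d>\a$) and $\mtcl C_{\o_1}$ is an ultrafilter containing the club of ordinals above $\a$. Let $W_\a=W^{\e(\a)}_\a$ be the one in the filter, let $W^\ast=\Delta_{\a<\o_1}W_\a\in\mtcl C_{\o_1}$, and fix a club $D\sub W^\ast$. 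As in the Observation, $X_{\d_0}=X_{\d_1}\cap\d_0$ for all $\d_0<\d_1$ in $W^\ast$, and hence also for all $\d_0<\d_1$ in $D$, so $S=\bigcup_{\d\in D}X_\d$ satisfies $S\cap\d=X_\d$ for every $\d\in D$. Now apply the ultrafilter property once more to $S$ itself: either $S\in\mtcl C_{\o_1}$ or $\o_1\setminus S\in\mtcl C_{\o_1}$. In the first case fix a club $E\sub S$ and set $C=E\cap D'$, where $D'$ is the club of limit points of $D$; for $\d\in C$ we have $C\cap\d\sub S\cap\d=X_\d$. In the second case fix a club $E\sub\o_1\setminus S$ and set $C=E\cap D'$; then $C\cap\d\sub(\o_1\setminus S)\cap\d$, which is disjoint from $S\cap\d=X_\d$, so $C\cap X_\d=\emptyset$. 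Either way $C$ witnesses $(3)$.

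It remains to prove $(2)\Rightarrow(1)$, which I expect to be the main obstacle, and here I would argue by contraposition using the Observation. Suppose $\mtcl C_{\o_1}$ is not an ultrafilter, so there is a stationary, co-stationary $S\sub\o_1$ (take $S$ with $S\notin\mtcl C_{\o_1}$ and $\o_1\setminus S\notin\mtcl C_{\o_1}$; by normality of the filter both $S$ and its complement are stationary). Consider $\vec X=(S\cap\d\,:\,\d\in\Lim(\o_1))$, which visibly satisfies $X_\d\sub\d$. I claim no club measures $\vec X$ in the sense of $\Measuring^\ast$. Indeed, let $C$ be any club; let $D$ be the club of $\d\in C$ that are limit points both of $C\cap S$ and of $C\setminus S$ (this set is a club because $S$ and $\o_1\setminus S$ are stationary, so $C\cap S$ and $C\setminus S$ are stationary, hence have club-many limit points, and intersecting with $C$ keeps it a club). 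For any such $\d$, both $C\cap\d\cap S=C\cap\d\cap X_\d$ and $(C\cap\d)\setminus S=(C\cap\d)\setminus X_\d$ are cofinal in $\d$, so no tail of $C\cap\d$ is contained in or disjoint from $X_\d$. Thus $\Measuring^\ast$ fails, completing the contrapositive and hence the cycle $(1)\Rightarrow(3)\Rightarrow(2)\Rightarrow(1)$.

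The only subtlety I anticipate is bookkeeping about which ambient choice principles are actually needed: the normality of $\mtcl C_{\o_1}$ is used to close under diagonal intersections (in $W^\ast$) and to pass between ``stationary'' and ``has club-many limit points,'' and one should check that no appeal to full $\AC$ sneaks in — in particular, the ``fix a club $D\sub W^\ast$'' steps only need that a set in $\mtcl C_{\o_1}$ contains a club, which is part of the definition of $\mtcl C_{\o_1}$, and no simultaneous choice over $\o_1$-many sets is required beyond what normality already grants.
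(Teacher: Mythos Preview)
Your proof is correct and follows the same cycle $(1)\Rightarrow(3)\Rightarrow(2)\Rightarrow(1)$ as the paper, with $(3)\Rightarrow(2)$ trivial and $(2)\Rightarrow(1)$ by the stationary/co-stationary counterexample, exactly as the paper does. The only difference is in $(1)\Rightarrow(3)$: the paper simply invokes the contrapositive of the preceding Observation (if $\mtcl C_{\o_1}$ is an ultrafilter there is no stationary co-stationary set, so the hypothesis~(2) of the Observation must fail for every $\vec X$, which is precisely statement~(3)), whereas you rebuild the Observation's machinery by hand and then apply the ultrafilter property once more to $S$. Your route works, but it is more labour than needed---the one-line contrapositive appeal suffices. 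A minor point: in your argument the passage through $D'$ is harmless but unnecessary, since $D$ is already a club and $S\cap\d=X_\d$ for all $\d\in D$; taking $C=E\cap D$ does the job directly.
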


\begin{proof} (3) trivially implies (2), and by the observation (1) implies (3). Finally, to see that (2) implies (1), note that the argument right after the definition of $\Measuring^\ast$ uses only $\ZF$ together with the regularity of $\o_1$ and the negation of (1).
\end{proof}

In particular, the strong form of $\Measuring^\ast$ given by (3) in the above observation follows from $\ZF$ together with the Axiom of Determinacy.

Much of the notation used in this paper follows the standards set forth in \cite{JECH} and \cite{KUNEN}. Other, less standard, pieces of notation will be introduced as needed. The rest of the paper is structured as follows. In Section \ref{first-construction} we construct a sequence $(\mtcl P_\b\,:\,\b\leq\k)$ of forcing notions. In Section \ref{relevant_facts} we prove the relevant facts about this construction which will show $\mtcl P_\k$ to witness the conclusion of Theorem \ref{mainthm-0-intro}. Subsection \ref{conclusion} contains some remarks on why our construction in Section \ref{first-construction} cannot possibly be adapted to force $\Unif(\vec C)$ for any ladder system $\vec C$ (which, as we already mentioned, is well-known to be incompatible with $\CH$), and on the (closely related) obstacles towards building models of reasonable forcing axioms together with $\CH$ using the present approach.

\section{The main construction}\label{first-construction}

The theorem we will prove in this and the next section, we recall, is the following.

\begin{theorem}\label{mainthm-0} ($\CH$) Let $\k\geq\o_2$ be a regular cardinal such that $2^{{<}\k}=\k$.
Then there is a partial order $\mtcl P\sub H(\o_2)$ with the following properties.
\begin{enumerate}
\item $\mtcl P$ is proper.
\item $\mtcl P$ is $\aleph_2$-Knaster.
\item $\mtcl P$ forces the following statements.
\begin{enumerate}
\item $\Measuring$
\item $\CH$
\item $2^{\aleph_1}=\kappa$
\end{enumerate}
\end{enumerate}
\end{theorem}

In this section we present the construction of a certain sequence $(\mtcl P_\b\,:\,\b\leq\k)$ of forcing notions. In Section \ref{relevant_facts} we will prove that $\mtcl P_\k$ is a forcing $\mtcl P$ witnessing the conclusion of Theorem \ref{mainthm-0}.

We start out by fixing some pieces of notation that will be used in both this and the next section. If $N$ is a set such that $N\cap \omega_1\in \omega_1$, $\delta_N$ denotes this intersection. $\delta_N$ is also called \emph{the height of $N$}.

Given $P\sub H(\kappa)$ and $N\sub H(\kappa)$, we will tend to write $(N, P)$ as short-hand for $(N, P\cap N)$.  Also, if $N_0$ and $N_1$ are $\in$-isomorphic elementary submodels of $H(\k)$, we refer to the unique $\in$-isomorphism $\Psi:(N_0; \in)\to (N_1; \in)$ as $\Psi_{N_0, N_1}$.

We will make use of the following notion of symmetric system from \cite{forcing-conseqs}.

 \begin{definition}\label{hom0}
Let $T\subseteq H(\kappa)$ and let $\mathcal N$ be a finite collection of countable subsets of $H(\kappa)$.
We say that \emph{$\mathcal N$ is a $T$-symmetric system} if and only if the following holds.

\begin{enumerate}

\item For every $N \in\mathcal N$, $(N; \in, T)$ is an elementary substructure of $(H(\kappa); \in, T)$.

\item Given $N_0$ and $N_1$ in $\mathcal N$, if $\delta_{N_0}=\delta_{N_1}$, then there is a unique isomorphism $$\Psi_{N_0, N_1}:(N_0; \in, T)\into (N_1; \in, T)$$

\noindent Furthermore, $\Psi_{N_0, N_1}$ is the identity on $N_0\cap N_1$.

\item For all $N_0$, $N_1$, $M\in \mathcal N$, if $M \in N_0$ and $\delta_{N_0}=\delta_{N_1}$, then $\Psi_{N_0, N_1}(M) \in\mathcal N$.

\item For all $N$ and $M$ in $\mathcal N$, if $\delta_M<\delta_N$, then there is $N'\in\mathcal N$ such that $\delta_{N'}=\delta_N$ and $M\in N'$.
\end{enumerate}

\end{definition}

Taking up a suggestion of Inamdar,  we call condition (4) \emph{the shoulder axiom}.

Strictly speaking, the phrase `$T$-symmetric system' is ambiguous in general since $H(\kappa)$ may not be determined by $T$. However, in all practical cases $(\bigcup T)\cap\Ord = \kappa$, so $T$ does determine $H(\kappa)$ in these cases.

We will talk about \emph{symmetric systems} in some contexts in which $T$ is clear or irrelevant.

The following two amalgamation lemmas  are proved in \cite{forcing-conseqs}.

\begin{lemma}\label{amalg-0}
Let $T\sub H(\k)$ and let $\mtcl N$ be a $T$-symmetric system.
Let $N\in\mtcl N$ and let $\mtcl M\in N$ be a $T$-symmetric system
such that $\mtcl N\cap N\sub\mtcl M$. Let $$\mtcl W(\mtcl N, \mtcl M, N):=\mtcl N\cup\{\Psi_{N, N'}(M)\,:\, M\in\mtcl M,\,N'\in\mtcl N,\,\d_{N'}=\d_N\}$$ Then $\mtcl W(\mtcl N, \mtcl M, N)$ is the $\sub$-minimal $T$-symmetric system $\mtcl W$
such that $\mtcl N\cup\mtcl M\sub\mtcl W$.
\end{lemma}

Given $T\sub H(\k)$ and $\mtcl N_0$ and $\mtcl N_1$, $T$-symmetric systems, let us write $\mtcl N_0\cong_T\mtcl N_1$ if $|\mtcl N_0|=|\mtcl N_1|=n$, for some $n<\o$, and there are enumera\-tions $(N^0_i\,:\, i<n)$ and $(N^1_i\,:\,i<n)$ of $\mtcl N_0$ and $\mtcl N_1$, respectively, for which there is an isomorphism $$\Psi:(\bigcup\mtcl N_0; \in, N^0_i, T)_{i<n}\into (\bigcup\mtcl N_1; \in, N^1_i, T)_{i<n}$$ which is the identity on $(\bigcup\mtcl N_0)\cap (\bigcup \mtcl N_1)$.

\begin{lemma}\label{amalg-1}
Let $T\sub H(\k)$ and let $\mtcl N_0$ and $\mtcl N_1$ be $T$-symmetric systems such that $\mtcl N_0\cong_T\mtcl N_1$. Then $\mtcl N_0\cup\mtcl N_1$ is the $\sub$-minimal $T$-symmetric system $\mtcl W$ such that $\mtcl N_0\cup\mtcl N_1\sub\mtcl W$.
\end{lemma}

We will recursively build a sequence $(\mtcl P_\b\,:\,\b\leq\k)$ of forcing notions, together with a sequence of predicates $(\Phi_\a\,:\,\a<\k)$. Theorem \ref{mainthm-0} will be witnessed by $\mtcl P_\k$. Given $\b<\kappa$ we let $$\mtcl T_\b=\{N\in [H(\k)]^{\al_0}\,:\, (N; \in, \Phi_\b)\elsub (H(\k); \in \, \Phi_\b)\}$$

Let $\Succ(\kappa)$ denote the set of successor ordinals below $\kappa$. To start with, let us fix a function $\Phi:\Succ(\k)\into H(\k)$ with the property that $\{\a\in\Succ(\kappa)\,:\,\Phi(\a)=x\}$ is unbounded in $\k$ for each $x\in H(\k)$ (which exists by $2^{{<}\k}=\kappa$), and let $\Phi_0$ be the satisfaction predicate for the structure $(H(\k); \in, \Phi)$. Also, given any $\b>0$, $\Phi_\b$ will uniformly encode, among other things, the sequences $(\Phi_\a\,:\,\a<\b)$ and $(\text{Sat}(\Phi_\a)\,:\,\a<\b)$, where $\text{Sat}(\Phi_\a)$ denotes the satisfaction predicate for the structure $(H(\k); \in, \Phi_\a)$. 

We will call an ordered pair $(N,  \r)$, where
\begin{itemize}
\item $N$ is a countable elementary submodel of $(H(\k); \in, \Phi_0)$,
\item $\r\in N\cap\k$,
and
\item $N\in\mtcl T_{\a+1}$ for every $\a\in N\cap\r$,
\end{itemize}
a \noindent \emph{model with marker}.\footnote{In the definition of $\mtcl P_\b$, we will assume $\Phi_{\a+1}$ has been defined for all $\a<\b$. While defining $\mtcl P_\b$, we will refer to the notion of model with marker. In that case, the marker $\r$ will be at most $\b$, and hence $\Phi_{\a+1}$---and therefore $\mtcl T_{\a+1}$---will be defined for all $\a\in N\cap\r$.}

If $(N, \r)$ is a model with marker, we will sometimes say that \emph{$\r$ is the marker of $(N, \r)$}.

In our forcing construction, we will use models with markers $(N, \r)$ in a crucial way. The presence of the marker $\r$ will tell us that $N$ is to be seen as `active' for all stages in $N\cap\r$.

Given an unordered pair $$e=\{(N_0, \r_0), (N_1,  \r_1)\}$$ of models with markers, we will call $e$ an \emph{edge} in case

 \begin{enumerate}
 \item $N_0\cong N_1$;
 \item for every $\a\in N_0\cap\r_0$, if $\bar\a=\Psi_{N_0, N_1}(\a)<\r_1$, then
 $\Psi_{N_0, N_1}$ is an isomorphism between $$(N_0; \in, \Phi_{\a+1})$$ and $$(N_1; \in, \Phi_{\bar\a+1}).$$
 \end{enumerate}

 We note that, in the above definition, $(N_0, \r_0)$ and $(N_1, \r_1)$ may or may not be distinct. Hence, an edge may contain two models with markers or may just be the singleton $\{(N, \r)\}$ of a model with marker $(N, \r)$.

Also, we call an ordered pair $\la (N_0, \r_0), (N_1, \r_1)\ra$ a \emph{directed edge} if $\{(N_0, \r_0), (N_1, \r_1)\}$ is an
 edge. If $\mtcl G$ is a set of edges, we say that a directed edge $\la (N_0, \r_0), (N_1, \r_1)\ra$ \emph{comes from $\mtcl G$} if $\{(N_0, \r_0), (N_1, \r_1)\}\in\mtcl G$.

 If $e=\la  (N_0, \r_0), (N_1, \r_1)\ra$ is a directed edge, we write $\Psi_e$ for $\Psi_{N_0, N_1}$.

If $\b<\k$, we say that an edge $\{(N_0, \r_0), (N_1,  \r_1)\}$ is \emph{below $\b$} if $\r_0\leq\b$ and $\r_1\leq\b$.

Given a set $\mtcl G$ of
edges,\footnote{We think of sets of
edges as
graphs, hence the choice of the letter $\mtcl G$ in this context.} we denote $\bigcup\mtcl G$ by $\D(\mtcl G)$; i.e., $\D(\mtcl G)$ is the set of models with markers $(N, \r)$ for which there is some $(N', \r')$ such that $\{(N, \r), (N', \r')\}\in\mtcl G$.

Given a directed edge $e=\langle (N_0, \r_0), (N_1, \r_1)\rangle$ and an edge $e'=\{(N_0', \r_0'), (N_1', \r_1')\}$ such that
\begin{itemize}
\item $e'\in N_0$,
\item $\max\{\r_0', \r_1'\}\leq\r_0$, and
\item $\Psi_{N_0, N_1}(\max\{\r_0', \r_1'\})\leq\r_1$,
\end{itemize}
\noindent we denote $$\{(\Psi_{N_0, N_1}(N_0'), \Psi_{N_0, N_1}(\r_0')),  (\Psi_{N_0, N_1}(N_1'), \Psi_{N_0, N_1}(\r_1'))\}$$ by $\Psi_e(e')$.

\begin{fact} Suppose $e=\langle (N_0, \r_0), (N_1, \r_1)\rangle$ is a directed edge and $e'=\{(N_0', \r_0'), (N_1', \r_1')\}$ is and edge such that
\begin{itemize}
\item $e'\in N_0$,
\item $\max\{\r_0', \r_1'\}\leq\r_0$, and
\item $\Psi_{N_0, N_1}(\max\{\r_0', \r_1'\})\leq\r_1$.
\end{itemize} Then $\Psi_e(e')$ is an
edge. \end{fact}

\begin{proof} For $i\in\{0, 1\}$, let $N_i''=\Psi_{N_0, N_1}(N_i')$. Then, for each $i$, the elementarity of $\Psi_{N_0, N_1}$, together with the fact that $N_0'\cong N_1'$ and $\r_i'\in N_i'$, implies that $N_0''\cong N_1''$ and $\Psi_{N_0, N_1}(\r_i')\in N_i''$. Furthermore, for each $\a\in N_i'\cap\r_i'$, the fact that $\Psi_{N_0, N_1}$ is also an isomorphism between the structures $(N_0; \in, \Phi_{\a+1})$ and $(N_1; \in, \Phi_{\bar\a+1})$, for $\bar\a=\Psi_{N_0, N_1}(\a)$, together with $(N_i'; \in, \Phi_{\a+1})\elsub (N_0; \in, \Phi_{\a+1})$, implies that $$(N_i''; \in, \Phi_{\bar\a+1})\elsub (N_1; \in, \Phi_{\bar\a+1})\elsub (H(\k); \in, \Phi_{\bar\a+1})$$ Hence, $(N_i'', \Psi_{N_0, N_1}(\r_i'))$ is a model with markers. Finally, if $\a$ and $\bar\a$ are as above, with $i=0$, $\b=\Psi_{N_0', N_1'}(\a)$, and $\a^\dag:=\Psi_{N_0'', N_1''}(\bar\a)=\Psi_{N_0, N_1}(\b)<\Psi_{N_0, N_1}(\r_1')$, then letting $\a^*=\max\{\a, \b\}$ and $\a^{**}=\Psi_{N_0, N_1}(\a^*)$ and using the fact that $(N_0'; \in, \Phi_{\a+1})\cong (N_1'; \in, \Phi_{\Psi_{N_0', N_1'}(\a)+1})$ and that $\Psi_{N_0, N_1}$ is also an isomorphism between $(N_0; \in, \Phi_{\a^*+1})$ and $(N_1; \in, \Phi_{\a^{**}+1})$, we get that $(N_0''; \in, \Phi_{\bar\a+1})\cong (N_1''; \in, \Phi_{\a^\dag+1})$. To see this, simply use that $(N_0', \in, \Phi_{\a+1})\elsub (N_0; \in, \Phi_{\a+1})$, $(N_1', \in ,\Phi_{\b+1})\elsub (N_0; \in, \Phi_{\b+1})$ and, if $\a^*>\min\{\a, \b\}$, also  that $\Phi_{\a^*+1}$ codes the satisfaction relation of $(H(\k); \in, \Phi_{\min\{\a, \b\}+1})$.
 \end{proof}

Given a set $\mtcl G$ of edges, we say that $\mtcl G$ is \emph{closed under restrictions} if $\{(N_0, \a_0), (N_1, \a_1)\}\in\mtcl G$ whenever $\{(N_0, \r_0), (N_1, \r_1)\}\in\mtcl G$, $\a_0\in N_0\cap (\r_0+1)$, and $\a_1\in N_1\cap (\r_1+1)$.
Also, we say that $\mtcl G$ is \emph{closed under copying} in case for every directed edge $e=\langle (N_0, \r_0), (N_1, \r_1)\rangle$ coming from $\mtcl G$ and every edge $e'=\{(N'_0, \r'_0), (N'_1, \r'_1)\}\in\mtcl G$, if $e'\in  N_0$, $\max\{\r_0', \r_1'\}\leq\r_0$, and $\Psi_{N_0, N_1}(\max\{\r_0', \r_1'\})\leq\r_1$, then $\Psi_e(e')\in\mtcl G$.

If $\D$ is a set of models with markers and $\b<\k$, we let $$\mtcl N^\D_\b=\{N\,:\, (N, \b)\in\D\}.\footnote{Note that if $\mtcl G$ is a set of edges closed under restrictions and $\D=\D(\mtcl G)$, then  $\mtcl N^\D_0$ is clearly the same thing as $\dom(\D)$.}$$

We say that a set $\mtcl G$ of edges is \emph{sticky} in case for every ordinal $\a$ and for all $N_0$, $N_1\in\mtcl N^{\D(\mtcl G)}_{\a+1}$, if $\d_{N_0}=\d_{N_1}$, then $\{(N_0, \a+1), (N_1, \a+1)\}\in\mtcl G$.\footnote{In particular, if $\mtcl G$ is sticky, then $\{(N, \a+1)\}\in\mtcl G$ for every ordinal $\a$ and every $N\in\mtcl N^{\D(\mtcl G)}_{\a+1}$.}

Given sets $\mtcl G_0$ and $\mtcl G_1$ of edges,
we say that \emph{$\mtcl G_0$ and $\mtcl G_1$ are compatible} in case for all $\a<\k$ and $N_0$, $N_1\in\mtcl N^{\D(\mtcl G_0)}_{\a+1} \cup \mtcl N^{\D(\mtcl G_1)}_{\a+1}$ such that $\d_{N_0}=\d_{N_1}$ we have that $(N_0; \in, \Phi_{\a+1})\cong (N_1; \in, \Phi_{\a+1})$. If this is the case, then
 there is a $\sub$-minimum
sticky
 set $\mtcl G$ of edges including both $\mtcl G_0$ and $\mtcl G_1$ and which is closed under restrictions and closed under copying. We denote this set $\mtcl G$ by $\mtcl G_0\oplus\mtcl G_1$.

If $\mtcl G$ is a set of edges, we denote by $\mtbb M(\mtcl G)$ some canonically chosen structure with universe $\bigcup\dom(\D(\mtcl G))$ coding $\mtcl G$ and $$\langle (\a, \Phi_{\a+1}\cap\bigcup\dom(\D(\mtcl G)))\,:\,\a\in \bigcup\{N\cap\r\,:\,(N, \r)\in\D(\mtcl G)\}\rangle$$ Also, we consider the following form of the isomorphism relation $\cong_T$ for $T$-symmetric systems, for sets of edges: If $\mtcl G_0$ and $\mtcl G_1$ are sets of edges, we write $\mtcl G_0\cong\mtcl G_1$ in case there is an isomorphism $\Psi:\mtbb M(\mtcl G_0)\into\mtbb M(\mtcl G_1)$ which is the identity on $(\bigcup\dom(\D(\mtcl G_0)))\cap (\bigcup\dom(\D(\mtcl G_1)))$.

We will use the following easy extension of Lemma \ref{amalg-1}.

\begin{lemma}\label{amalg-1+}
Let $\mtcl G_0$ and $\mtcl G_1$ be
sticky
sets of edges closed under restrictions and under copying. Suppose $\mtcl G_0\cong \mtcl G_1$. Then $\mtcl G_0\oplus\mtcl G_1$ is the union of $\mtcl G_0\cup \mtcl G_1$
 and the set of unordered pairs $\{(N_0, \a_0+1), (N_1, \a_1+1)\}$ such that $\d_{N_0}=\d_{N_1}$, $\a_0\in N_0$, $\a_1\in N_1$, and for which there is some $\a\geq\a_0$, $\a_1$ such that $N_0\in\mtcl N^{\D(\mtcl G_0)}_{\a+1}$ and $N_1\in\mtcl N^{\D(\mtcl G_1)}_{\a+1}$.\footnote{We note that, in particular, $\mtcl G_0$ and $\mtcl G_1$ are compatible, and so $\mtcl G_0\oplus\mtcl G_1$ exists.}
Hence, if, in addition, $\mtcl N^{\D(\mtcl G_0)}_0$ and $\mtcl N^{\D(\mtcl G_1)}_0$ are $\Phi_0$-symmetric systems and $\mtcl N^{\D(\mtcl G_0)}_{\a+1}$ and $\mtcl N^{\D(\mtcl G_1)}_{\a+1}$ are $\Phi_{\a+1}$-symmetric systems for each $\a<\k$, then $\mtcl N^{\D(\mtcl G_0\oplus\mtcl G_1)}_0$ is a $\Phi_0$-symmetric system and $\mtcl N^{\D(\mtcl G_0\oplus\mtcl G_1)}_{\a+1}$ is a $\Phi_{\a+1}$-symmetric system for each $\a<\k$.
\end{lemma}

If $\mtcl G$ is a set of edges and $\a<\k$, we let $$\mtcl G\av_\a=\{\{(N_0, \r_0), (N_1, \r_1)\}\in\mtcl G\,:\, \r_0, \r_1\leq \a\}$$

We will need the following easy lemma.

\begin{lemma}\label{amalg-restr-sym}
Suppose $\mtcl G$ is a
sticky
set of edges closed under restrictions and under copying. Suppose $\mtcl N^{\D(\mtcl G)}_0$ is a $\Phi_0$-symmetric system and $\mtcl N^{\D(\mtcl G)}_{\a+1}$ is a $\Phi_{\a+1}$-symmetric system for each $\a<\k$. Let $\a_0<\k$. Then the following holds.
\begin{enumerate}
\item $\mtcl G\av_{\a_0}$ is a sticky set of edges closed under restrictions and under copying.
\item $\mtcl N^{\D(\mtcl G\av_{\a_0})}_\a=\mtcl N^{\D(\mtcl G)}_\a$ for every $\a\leq\a_0$. In particular, $\mtcl N^{\D(\mtcl G\av_{\a_0})}_0$ is a $\Phi_0$-symmetric system and for each $\a<\k$,  $\mtcl N^{\D(\mtcl G\av_{\a_0})}_{\a+1}$ is a $\Phi_{\a+1}$-symmetric system.
\end{enumerate}
\end{lemma}

Given functions $f_0,\ldots, f_n$, for some $n<\o$, we let $$f_n \circ \ldots\circ f_0$$ be $f_0$ if $n=0$; if $n>0$, we let this expression denote the function $f$ with domain the set of $x$ such that for every $i<n$, $x\in\dom(f_i\circ\ldots\circ f_0)$ and $(f_i\circ\ldots\circ f_0)(x)\in\dom(f_{i+1})$, and such that for every $x\in \dom(f)$, $f(x)=f_n((f_{n-1}\circ\ldots\circ f_0)(x))$.

If $\vec{\mtcl E}=(\langle (N^i_0, \r^i_0), (N^i_1, \r^i_1)\rangle\,:\,\,i< n)$, for some $n<\o$, is a sequence of pairs of models with markers such that $N^i_0\cong N^i_1$ for all $i<n$, we denote $\Psi_{N^{n-1}_0, N^{n-1}_1}\circ\ldots\circ\Psi_{N^0_0, N^0_1}$ by $\Psi_{\vec{\mtcl E}}$. We also let $\d_{\vec{\mtcl E}}=\{\d_{N^i_0}\,:\,i<n\}$.

If $\mtcl G$ is a set of edges and $a\in H(\k)$, we call $\la a, \vec{\mtcl E}\ra$ a \emph{$\mtcl G$-thread} if $\vec{\mtcl E}$ is a finite sequence of directed edges coming from $\mtcl G$ and $a\in\dom(\Psi_{\vec{\mtcl E}})$.

Given a set $\mtcl G$ of edges and an ordinal $\a<\k$, we say that $$\la \a, (\la (N^i_0, \r^i_0), (N^i_1, \r^i_1) \ra\,:\, i\leq n)\ra$$  is a \emph{connected $\mtcl G$-thread} in case the following holds.

\begin{enumerate}
\item $\la \a, (\la (N^i_0, \r^i_0), (N^i_1, \r^i_1) \ra\,:\, i\leq n)\ra$ is a $\mtcl G$-thread.
\item $\a\in N^0_0\cap (\r^0_0+1)$ and $\Psi_{N^0_0, N^0_1}(\a)<\r^0_1+1$.
\item If $n>0$, then $\la (\Psi_{N^0_0, N^0_1}(\a), (\la (N^i_0, \r^i_0), (N^i_1, \r^i_1) \ra\,:\, 0<i\leq n)\ra$ is a connected $\mtcl G$-thread.
\end{enumerate}

If $\mtcl G$ is a set of edges and $(\d, \a)$, $(\d, \bar\a)\in\o_1\times\k$, we say that \emph{$(\d, \bar\a)$ is $\mtcl G$-accessible from $(\d, \a)$} if
\begin{itemize}
\item $\bar\a=\a$ or
\item there is a connected $\mtcl G$-thread $\la \a, \vec{\mtcl E}\ra$ such that $\bar\a=\Psi_{\vec{\mtcl E}}(\a)$ and $\d\leq\min(\d_{\vec{\mtcl E}})$.

\end{itemize}

In the proof of Lemma \ref{amalg-0+}, if $$\vec{\mtcl E}=(\langle (N^i_0, \r^i_0), (N^i_1, \r^i_1)\rangle\,:\,\,i< n)$$ is a sequence of ordered edges, we will denote the sequence $$(\langle (N^{n-1-i}_1, \r^{n-1-i}_1), (N^{n-1-i}_0, \r^{n-1-i}_0)\rangle\,:\,\,i< n)$$ by $(\vec{\mtcl E})^{-1}$.

We will need the following counterpart of Lemma \ref{amalg-0} for sets of edges.

\begin{lemma}\label{amalg-0+}
Let $\b<\k$. Let $\mtcl G_0$ be a sticky set of edges below $\b$ closed under restrictions and under copying and such that $\mtcl N^{\D(\mtcl G_0)}_0$ is a $\Phi_0$--symmetric system and $\mtcl N^{\D(\mtcl G_0)}_{\a+1}$ is a $\Phi_{\a+1}$--symmetric system for each $\a<\k$. Let $N\in\mtcl N^{\D(G_0)}_\b$. Suppose $\mtcl G_1\in N$ is a sticky set of edges below $\b$ closed under restrictions and under copying and such that $\mtcl N^{\D(\mtcl G_1)}_0$ is a $\Phi_0$--symmetric system and $\mtcl N^{\D(\mtcl G_1)}_{\a+1}$ is a $\Phi_{\a+1}$--symmetric system for each $\a<\k$. Suppose $\mtcl G_0\cap N\sub\mtcl G_1$. Finally, suppose that for every $Q\in\dom(\D(\mtcl G_0))\cap N$,
$\mtcl G_1\cap Q=\mtcl G_0\cap Q$. 
%and $\D(\mtcl G_1)\cap Q=\D(\mtcl G_0)\cap Q$. 
Let $\mtcl G^*$ be the union of the following sets.
\begin{enumerate}
\item $\mtcl G_0$
%\item $\{\{(\Psi_{N, N'}(M), 0)\}\,:\, M\in\mtcl N^{\D(\mtcl G_1)}_0, N'\in\mtcl N^{\D(\mtcl G_0)}_0,\,\d_{N'}=\d_N\}$
\item The set $\mtcl G_2$ consisting of unordered pairs of the form $$\{(\Psi_{\vec{\mtcl E}}(N_0), \Psi_{\vec{\mtcl E}}(\r_0)), (\Psi_{\vec{\mtcl E}}(N_1), \Psi_{\vec{\mtcl E}}(\r_1))\},$$ where $\{(N_0, \r_0), (N_1, \r_1)\}\in\mtcl G_1$, $\la\{N_0, N_1\}, \vec{\mtcl E}\ra$ is a $\mtcl G_0$-thread with
$\min(\d_{\vec{\mtcl E}})=\d_N$,
 and $\la\r_0, \vec{\mtcl E}\ra$ and $\la\r_1, \vec{\mtcl E}\ra$ are connected $\mtcl G_0$-threads.
%\item The set $\mtcl G_3$ consisting of unordered pairs of the form $$\{(M_0, \a_0), (M_1, \a_1)\}$$ such that $\d_{M_0}=\d_{M_1}$ and for which there is some $\a<\b$ such that $\{(M_0, \a)\} \in \mtcl G_2$, $\{(M_1, \a)\} \in \mtcl G_2$,  $\a_0\in M_0\cap (\a+1)$, and $\a_1\in M_1\cap (\a+1)$.
\item The set $\mtcl G_3$ consisting of unordered pairs of the form $$\{(M_0, \a_0), (M_1, \a_1)\}$$ such that $\d_{M_0}=\d_{M_1}$ and for which there is some $\a<\b$ such that $\{(M_0, \a+1)\} \in \mtcl G_2$, $\{(M_1, \a+1)\} \in \mtcl G_2$,  $\a_0\in M_0\cap (\a+2)$, and $\a_1\in M_1\cap (\a+2)$.
\end{enumerate}
Then $\mtcl G^*$ is a sticky set of edges closed under restrictions and under copying, $\mtcl N^{\D(\mtcl G^*)}_0$ is a $\Phi_0$--symmetric system, and $\mtcl N^{\D(\mtcl G^*)}_{\a+1}$ is a $\Phi_{\a+1}$--symmetric system for each $\a<\k$.
\end{lemma}

\begin{proof}
It is immediate to check that, by our construction, $\mtcl G^*$ is closed under restrictions. Also, it is clear that $\mtcl N^{\Delta(\mtcl G^*)}_0=\mtcl N^{\Delta(\mtcl H)}_0$, where $$\mtcl H=\mtcl G_0\cup \{\{(\Psi_{N, N'}(M), 0)\}\,:\, M\in\mtcl N^{\D(\mtcl G_1)}_0, N'\in\mtcl N^{\D(\mtcl G_0)}_0,\,\d_{N'}=\d_N\}$$ Hence, by Lemma \ref{amalg-0}, $\mtcl N^{\mtcl G^*}_0$ is a $\Phi_0$-symmetric system. We will now prove, for every $\a<\b$, that $\mtcl N^{\D(\mtcl G^*)}_{\a+1}$ is a $\Phi_{\a+1}$-symmetric system. The point that needs the most work is the verification of the shoulder axiom for $\mtcl N^{\D(\mtcl G^*)}_{\a+1}$, which we will go through next.

%The closure of $\mtcl G^*$ under copying follows from the construction, the fact that $\mtcl G_1\cap Q=\mtcl G_0\cap Q$ for each $Q\in\D(\mtcl G_0)\cap N$, and the fact that if $M_0$, $M_1$ and $\a$ are as in the definition of $\mtcl G_3$, then there is some $M_1^*\in\mtcl N^{\D(\mtcl G_0)}_{\a+1}\sub\mtcl N^{\D(\mtcl G_1)}_{\a+1}$ with $\d_{M_1^*}=\d_{M_1}=\d_{M_0}$.

%and that stickiness of $\mtcl G^*$ holds at $\a$ (i.e., $\{(M_0, \a+1), (M_1, \a+1)\}\in\mtcl G^*$ for all $M_0$, $M_1\in\mtcl N^{\Delta(\mtcl G^*)}_{\a+1}$ such that $\d_{M_0}=\d_{M_1}$). This will conclude the proof of the lemma. The point that needs the most work is the verification of the shoulder axiom for $\mtcl N^{\D(\mtcl G^*)}_{\a+1}$, which we will go through next.

For this, given $M_0^*$, $M_1^*\in\mtcl N^{\mtcl G^*}_{\a+1}$ such that $\d_{M_0^*}<\d_{M_1^*}$,  it is enough to show that there is some $M_1^{**}\in\mtcl N^{\D(\mtcl G^*)}_{\a+1}$ such that $\d_{M_1^{**}}=\d_{M_1^*}$ and $M_0^*\in M_1^{**}$.  If $\d_{M_0^*}\geq\d_N$, then $M_0^*$ and $M_1^*$ are both in $\dom(\D(\mtcl G_0))$ and so we are done by the shoulder axiom for $\mtcl N^{\D(\mtcl G_0)}_{\a+1}$. Hence, we will assume in what follows that $\d_{M_0^*}<\d_N$. If $M_0^*\in\mtcl N^{\D(\mtcl G_0)}_{\a+1}$, then we may of course assume that $M_1^*\notin \mtcl N^{\D(\mtcl G_0)}_{\a+1}$. It then follows, by the definition of $\mtcl G_2$, together with the stickiness of $\mtcl G_0$ and the shoulder axiom for $\mtcl N^{\D(\mtcl G_0)}_{\a+1}$, that there is a sequence $\vec{\mtcl E}$ such that $\la M_0^*, \vec{\mtcl E}\ra$ is a $\mtcl G_0$-thread with $\min(\d_{\vec{\mtcl E}})=\d_N$, $\la \a+1,\vec{\mtcl E}\ra$ is a connected $\mtcl G_0$-thread,
%$\la \a,\vec{\mtcl E}\ra$ is a connected $\mtcl G_0$-thread, 
and $\Psi_{\vec{\mtcl E}}(M^*_0)\in N$. Then $M_0:=\Psi_{\vec{\mtcl E}}(M_0^*)\in\dom(\Delta(\mtcl G_0))\cap N$, and therefore $M_0\in\dom(\Delta(\mtcl G_1))$.

For $i=0$, $1$, let us fix $\a_i<\b$, $M_i\in\mtcl N^{\D(\mtcl G_1)}_{\a_i+1}$, and $\vec{\mtcl E}_i$ be such that $\la (M_i, \a_i+1), \vec{\mtcl E}_i\ra$ is a $\mtcl G_0$-thread,
$\min(\d_{\vec{\mtcl E}_i})=\d_N$,  and $\la \a_i+1, \vec{\mtcl E}_i\ra$ is a connected $\mtcl G_0$-thread. Suppose $\a=\Psi_{\vec{\mtcl E}_0}(\a_0)=\Psi_{\vec{\mtcl E}_1}(\a_1)$ and $\d_{M_0}<\d_{M_1}$. By the analysis in the previous paragraph, in order to show the shoulder axiom for $\mtcl N^{\D(\mtcl G^*)}_{\a+1}$ it will suffice to prove that there is some $M_1'\in\mtcl N^{\D(\mtcl G^*)}_{\a+1}$ such that $\d_{M_1'}=\d_{M_1}$ and $\Psi_{\vec{\mtcl E}_0}(M_0)\in M_1'$. By, if necessary, appending suitable ordered edges from $\mtcl G_0$ at the right places using stickiness of $\mtcl G_0$ and the shoulder axiom for $\mtcl N^{\Delta(\mtcl G_0)}_{\g+1}$ for appropriate $\g$---these places could be the beginning or the end of $\vec{\mtcl E}_0$, the beginning or the end of $\vec{\mtcl E}_1$, or somewhere inside $\vec{\mtcl E}_0$ or $\vec{\mtcl E}_1$---
 %an ordered edge from $\mtcl G_0$ of the form $\la (N, \a_0+1), (N', \a_0+1)\ra$ to the beginning of $\vec{\mtcl E}_0$ and / or of $\vec{\mtcl E}_1$ and appending suitable ordered edges from $\mtcl G_0$ to the end of $\vec{\mtcl E}_0$ and / or of $\vec{\mtcl E}_1$ using the stickiness of $\mtcl G_0$ and the shoulder axiom for $\mtcl N^{\Delta(\mtcl G_0)}_{\a_0+1}$ and $\mtcl N^{\Delta(\mtcl G_0)}_{\a+1}$, 
we obtain $\vec{\mtcl E}_0'$ and $\vec{\mtcl E}_1'$ such that $$\Psi_{\vec{\mtcl E}'_1}^{-1}\circ\Psi_{\vec{\mtcl E}'_0}:(N; \in)\into (N; \in)$$ is an isomorphism.  But then $\Psi_{\vec{\mtcl E}'_1}^{-1}\circ\Psi_{\vec{\mtcl E}'_0}\restr N$ is of course the identity on $N$, which implies that $\a_0=\a_1$ since $\Psi_{\vec{\mtcl E}'_1}^{-1}\circ\Psi_{\vec{\mtcl E}'_0}(\a_0)=\a_1$ from the way we have constructed $\vec{\mtcl E}_0'$ and $\vec{\mtcl E}_1'$ from $\vec{\mtcl E}_0$ and $\vec{\mtcl E}_1$, respectively. Now, by the shoulder axiom for $\mtcl N^{\D(\mtcl G_1)}_{\a_0+1}$, we can find $M_1^\dag\in\mtcl N^{\D(\mtcl G_1)}_{\a_0+1}$ such that $\d_{M_1^\dag}=\d_{M_1}$ and $M_0\in M_1^\dag$, and $M_1^{**}:=\Psi_{\vec{\mtcl E}_0}(M_1^\dag)$ is then a model in $\mtcl N^{\mtcl G^*}_{\a+1}$ as desired.

Similarly, by an argument as in the above proof of the shoulder axiom, we can see that if $M_0$, $M_1\in\mtcl N^{\D(\mtcl G^*)}_{\a+1}$ are such that $\d_{M_0}=\d_{M_1}$, then $(M_0; \in, \Phi_{\a+1})\cong (M_1; \in, \Phi_{\a+1})$. More specifically, and as in the proof of the shoulder axiom, we may assume that we are in the case in which for each $i\in\{0, 1\}$ there are $\a_i<\b$, $M^-_i\in\mtcl N^{\D(\mtcl G_1)}_{\a_i+1}$, and $\vec{\mtcl E}_i$ such that $\la (M^-_i, \a_i+1), \vec{\mtcl E}_i\ra$ is a $\mtcl G_0$-thread,
$\min(\d_{\vec{\mtcl E}_i})=\d_N$, $\la \a_i+1, \vec{\mtcl E}_i\ra$ is a connected $\mtcl G_0$-thread, and $\Psi_{\vec{\mtcl E}_i}(M_i^-)=M_i$. To see that  $(M_0; \in, \Phi_{\a+1})\cong (M_1; \in, \Phi_{\a+1})$, we notice that $\a_0=\a_1$ as in the previous argument and therefore $(M_0^-; \in, \Phi_{\a_0+1})\cong (M_1^-; \in, \Phi_{\a_1+1})$. Also, by the same construction as in the argument in the proof of the shoulder axiom, we may obtain $\vec{\mtcl E}_0'=(\langle (N^{i, 0}_0, \r^{i, 0}_0), (N^{i, 0}_1, \r^{i, 0}_1)\rangle\,:\,\,i\leq n_0)$ and $\vec{\mtcl E}_1'=(\langle (N^{i, 1}_0, \r^{i, 1}_0), (N^{i, 1}_1, \r^{i, 1}_1)\rangle\,:\,\,i\leq n_1)$ from $\vec{\mtcl E}_0$ and $\vec{\mtcl E}_1$, so that $\dom(\vec{\mtcl E}'_0)=\dom(\vec{\mtcl E}'_1)=N$, $\Psi_{\vec{\mtcl E}_0'}(M_0^-)=M_0$, and  $\Psi_{\vec{\mtcl E}_1'}(M_0^-)=M_1$. 
%and $\range(\vec{\mtcl E}_0)=\range(\vec{\mtcl E}_1)=N'$ for some $N'$. 
But then the desired conclusion holds since $$\Psi_{\vec{\mtcl E}'_0}:(N; \in, \Phi_{\a_0+1})\into (N^{n_0, 0}_1; \in, \Phi_{\a+1})$$ and $$\Psi_{\vec{\mtcl E}'_1}:(N; \in, \Phi_{\a_0+1})\into (N^{n_1, 1}_1; \in, \Phi_{\a+1})$$ are isomorphisms. The proof that $(\Psi_{M_0, M_1}(M), \a+1)\in\D(\mtcl G^*)$ whenever $M_0$, $M_1$ are as above and $M\in\mtcl N^{\D(\mtcl G^*)}_{\a+1}\cap M_0$, which concludes the proof that $\mtcl N^{\D(\mtcl G^*)}_{\a+1}$ is a $\Phi_{\a+1}$-symmetric system, is contained in the argument in the next paragraph.

We now show that $\mtcl G^*$ is closed under copying. For this, suppose $e=\{(M_0, \r_0), (M_1, \r_1)\}\in\mtcl G^*$ and $e'=\{(M_0', \r_0'), (M_1', \r_1')\}\in\mtcl G^*\cap M_0$ are such that  $\max\{\r_0', \r_1'\}\leq\r_0$ and $\Psi_{N_0, N_1}(\max\{\r_0', \r_1'\})\leq\r_1$, and let us prove that $\Psi_{M_0, M_1}(e')\in\mtcl G^*$. The case when $\d_{M_0}\geq\d_N$ follows from the construction of $\mtcl G_2$ -- in this case of course $M_0$, $M_1\in\mtcl N^{\D(\mtcl G_0)}_{\a+1}$. Now suppose $\d_{M_0}<\d_N$. If $e\in\mtcl G_2$, then the conclusion follows from the construction of $\mtcl G_2$ and the hypothesis that $Q\cap\mtcl G_1=Q\cap\mtcl G_0$ for every $Q\in\dom(\D(\mtcl G_0))\cap N$. In order to finish this proof it thus remains to consider the case in which $e\in \mtcl G_3$.  We then have that there is $\a+1\geq\r_0$, $\r_1$ such that the edges $\{(M_0, \a+1)\}$ and $\{(M_1, \a+1)\}$ are both in $\mtcl G_2$. Hence there are $\a^*<\b$ and $\{(M_0^*, \a^*+1)\}$, $\{(M_1^*, \a^*+1)\}\in\mtcl G_1$ such that $M_0=\Psi_{\vec{\mtcl E}_0}(M_0^*)$ and $M_1=\Psi_{\vec{\mtcl E}_1}(M_1^*)$ for suitable $\vec{\mtcl E}_0$ and $\vec{\mtcl E}_1$ as in the definition of $\mtcl G_2$ such that $\Psi_{\vec{\mtcl E}_0}(\a^*)=\Psi_{\vec{\mtcl E}_1}(\a^*)=\a$. Since then $\{(M_0^*, \a^*+1), (M_1^*, \a^*+1)\}\in\mtcl G_1$ by stickiness of $\mtcl G_1$ and $\Psi_{\vec{\mtcl E_0}}^{-1}(e')\in\mtcl G_1\cap M_0^*$,  $e^*:=\Psi_{M_0^*, M_1^*}(\Psi_{\vec{\mtcl E_0}}^{-1}(e'))\in\mtcl G_1$. This finishes the proof in this case since then $\Psi_{M_0, M_1}(e')=\Psi_{\vec{\mtcl E}_1}(e^*)\in\mtcl G_2\sub\mtcl G^*$.  

%closure of $\mtcl N^{\D(\mtcl G^*)}_{\a+1}$ under isomorphisms  $\Psi_{M_0, M_1}$ for all $M_0$, $M_1\in\mtcl N^{\D(\mtcl G^*)}_{\a+1}$ such that $\d_{M_0}=\d_{M_1}$ follows from the construction of $\mtcl G_2$ when $\d_{M_0}\geq\d_N$ (in which case of course $M_0$, $M_1\in\mtcl N^{\D(\mtcl G_0)}_{\a+1}$). Now suppose $\d_{M_0}<\d_N$, and let $M\in\mtcl N^{\D(\mtcl G^*)}_{\a+1}\cap M_0$. We need to show that also $\Psi_{M_0, M_1}(M)\in\mtcl N^{\D(\mtcl G^*)}_{\a+1}$. In this case we may assume that the edges $\{(M_0, \a+1)\}$ and $\{(M_1, \a+1)\}$ are both in $\mtcl G_3$. We then have that there are $\a^*<\b$ and $\{(M_0^*, \a^*+1)\}$, $\{(M_1^*, \a^*+1)\}\in\mtcl G_1$ such that $M_0=\Psi_{\vec{\mtcl E}_0}(M_0^*)$ and $M_1=\Psi_{\vec{\mtcl E}_1}(M_1^*)$ for suitable $\vec{\mtcl E}_0$ and $\vec{\mtcl E}_1$ as in the definition of $\mtcl G_2$. Also, $M^*:=\Psi_{\vec{\mtcl E}^{-1}_0}(M)\in\mtcl N^{\D(\mtcl G_1)}_{\a^*+1}\cap M_0^*$. But then $M^\dag=\Psi_{M^*_0, M^*_1}(M^*)\in\mtcl N^{\D(\mtcl G_1)}_{\a^*+1}$ as $\mtcl N^{\D(\mtcl G_1)}_{\a^*+1}$ is closed under isomorphisms, and $\Psi_{\vec{\mtcl E}_1}(M^\dag)=\Psi_{M_0, M_1}(M)\in\mtcl N^{\D(\mtcl G^*)}_{\a+1}$.  This finishes the proof that $\mtcl N^{\D(\mtcl G^*)}_{\a+1}$ is a $\Phi_{\a+1}$-symmetric system.

Finally, we note that stickiness of $\mtcl G^*$ holds at $\a+1$ (i.e., the unordered pair $\{(M_0, \a+1), (M_1, \a+1)\}\in\mtcl G^*$ for all $M_0$, $M_1\in\mtcl N^{\Delta(\mtcl G^*)}_{\a+1}$ such that $\d_{M_0}=\d_{M_1}$) since, by the definition of $\mtcl G_2$,  we can assume that $\{(M_0, \a+1), (M_1, \a+1)\}\notin \mtcl G_0$, $\d_{M_0}=\d_{M_1}<\d_N$, and hence $$\{(M_0, \a+1), (M_1, \a+1)\}\in\mtcl G_3.$$

\end{proof}

\begin{remark} The set $\mtcl G^*$ in the proof of Lemma \ref{amalg-0+} is precisely $\mtcl G_0\oplus\mtcl G_1$. \end{remark}

\begin{remark}\label{inamdar}
The main reason for requiring our sets of edges $\mtcl G$ to be sticky, rather than simply asking that $\mtcl N^{\D(\mtcl G)}_{\a+1}$ be a $\Phi_{\a+1}$-symmetric system for each $\a$, it to secure the above amalgamation lemma. As observed by Inamdar, this lemma does not hold if we do not require stickiness.
\end{remark}

%Let $\Succ(\kappa)$ denote the set of successor ordinals below $\kappa$.
We will call a function $F$ \emph{pertinent} if $\dom(F)\in [\Succ(\kappa)]^{{<}\o}$ and for every $\a\in \dom(F)$,
 $F(\a)=(b_\a, d_\a)$, where

\begin{itemize}
 \item $b_{\alpha}\in [\Lim(\o_1)\times\o_1]^{{<}\o}$ is a regressive function (i.e., $b_\alpha(\d)<\d$ for each $\d\in\dom(b_\a)$);
\item $d_\alpha\in [\o_1\times H(\k)]^{{<}\o}$.
\end{itemize}

In the above situation, we will often refer to $b_\a$ and $d_\a$ as, respectively, $b^F_\a$, and $d^F_\alpha$. Also, if $\a\notin \dom(F)$, $b^F_\a$ and $d^F_\a$ are both defined to be the empty set.

Given an ordered pair $q=(F, \mtcl G)$, where $F$ is a function and $\mtcl G$ is a set of edges,
we will denote $F$ and $\mtcl G$ by, respectively, $F_q$ and $\mtcl G_q$. Given $\a\in \dom(F_q)$, we will denote $b^{F_q}_\a$ and $d^{F_q}_\a$ by, respectively, $b^q_\a$ and $d^q_\a$.

If $q=(F_q,  \mtcl G_q)$, where $F_q$ and $\mtcl G_q$ are as above, and $\b<\k$, we let
$\mtcl N^q_\beta$ stand for $\mtcl N^{\D(\mtcl G_q)}_\b$. If $G$ is a set of ordered pairs as above, we denote by $\mtcl N^G_\b$ the set $\bigcup\{\mtcl N^q_\b\,:\,q\in G\}$.

Given  $q=(F_q, \mtcl G_q)$, where $F_q$ and $\mtcl G_q$ are as above, and given $N\sub H(\k)$, we denote by $q\restr N$ the ordered pair $(F_q\restr\restr N, \mtcl G_q\cap N)$, where $F_q\restr\restr N$ is the function with domain $\dom(F_q)\cap N$ such that $$(F_q\restr\restr N)(\a)=(b^q_\a\cap N, d^q_\a\cap N)$$ for each $\a\in \dom(F)\cap N$.

Also, given $q=(F_q, \mtcl G_q)$ as above, $\d<\o_1$, and $\a<\k$, we denote by $\Xi^{q, \a}_\d$ the set of ordinals $\bar\a$ such that $(\d, \bar\a)$ is $\mtcl G_q$-accessible from $(\d, \a)$, $\bar\a\in\dom(F_q)$, and $\d\in \dom(b^q_{\bar\a})$.

We will now define our sequence $(\mathcal P_\beta\,:\,\beta\leq\k)$ and  $(\Phi_\b\,:\,\b<\k)$. As we said before, Theorem \ref{mainthm-0} will be witnessed by $\mtcl P_{\k}$. We already defined $\Phi_0$.

Given $\a\leq\k$,
 $\dot G_\alpha$ will be the canonical $\mathcal P_\alpha$-name for the generic filter added by $\mtcl P_\a$.
We will denote the forcing relation for $\mtcl P_\a$ by $\Vdash_\a$, and the extension relation for $\mtcl P_\a$ by $\leq_\a$.

Given any $\a<\k$, and assuming $\mtcl P_\a$ has been defined, we let $\dot C^{\a}$ be some canonically chosen (using $\Phi$) $\mathcal P_{\a}$-name for a club-sequence on $\omega_1^V$ for which the following holds.

\begin{itemize}

\item If $\Phi(\a)$ is a $\mathcal P_{\a}$-name for a club-sequence on $\omega_1$, then $\dot C^{\a} = \Phi(\a)$.

\item If $\Phi(\a)$ is not a $\mathcal P_{\a}$-name for a club-sequence on $\omega_1$, then $\dot C^{\a}$ is a $\mtcl P_\a$-name for $\vec C$, where $\vec C\in V$ is some fixed club-sequence on $\o_1$.

\end{itemize}

Given $\d\in\Lim(\o_1)$, we let $\dot C^\a_\d$ be a $\mtcl P_\a$-name for $\dot C^\a(\d)$ (where $\dot C^\a(\d)$ of course refers to the $\d$-th member of $\dot C^\a$).

We are finally in a position to define our construction. Let $\beta <\k$, and suppose $\mathcal P_\alpha$, $\Phi_\a$ and $\Phi_{\a+1}$
have been defined for each $\a<\b$. Suppose, in addition, that for all $\bar\a<\a<\b$, every $\mtcl P_{\bar\a}$-name is also a $\mtcl P_\a$-name. We aim to define $\mtcl P_\b$ and $\Phi_{\b+1}$, and also $\Phi_\b$ if $\b<\k$ is a nonzero limit ordinal.

An ordered pair $q=(F_q, \mtcl G_q)$ is a $\mathcal P_\beta$-condition if and only if it has the following properties.

\begin{enumerate}

\item $\mtcl G_q$ is a sticky set of edges below $\b$ closed under restrictions and under copying, and such that:

\begin{enumerate}
\item $\mtcl N^{\D(\mtcl G_q)}_0$ is a $\Phi_0$-symmetric system;
\item for every $\a<\b$, $\mtcl N^{\D(\mtcl G_q)}_{\a+1}$ is a $\Phi_{\a+1}$-symmetric system.
\end{enumerate}

\item $F_q$ is a pertinent function with $\dom(F_q)\sub \b$.

\item For every $\alpha<\beta$, \emph{the restriction of $q$ to $\alpha$}, $q\arrowvert_{\alpha}$, is a condition in $\mathcal P_\alpha$, where $$q\av_{\a}:=(F_q\restr \alpha, \mtcl G_q\av_\alpha)$$

\item If $\a\in \dom(F_q)$, then $F_q(\a)=(b^q_\a, d^q_\a)$
has the following properties.

\begin{enumerate}
\item For every $\d\in \dom(b^q_\a)$ there is some $N\in\mtcl N^q_{\a+1}$ such that $\d=\d_N$.

\item For every  $N\in\mtcl N^q_{\a+1}$ and $\d\in \dom(b^q_\a)$, if $b^q_\a(\d)<\d_N<\d$ and $\b=\a+1$, then $q\av_\a\Vdash_\a\d_N\notin\dot C^\a_\d$.

\item For every $N\in\mtcl N^q_{\a+1}$, $(\d, a)\in d^q_\a\cap N$ and $N'\in\mtcl N^q_{\a+1}$, if $\d_{N'}=\d_N$, then $(\d, \Psi_{N, N'}(a))\in d^q_\a$.

\item For every $(\d, a)\in d^q_\a$ and $N\in\mtcl N^q_{\a+1}$, if $\d<\d_N$, then there is some $N'\in\mtcl N^q_{\a+1}$ such that $\d_{N'}=\d_N$ and $a\in N'$.

\end{enumerate}

\item Suppose $\b=\a+1$. For every $N\in\mtcl N^q_{\a+1}$, if $\Xi^{q, \a}_{\d_N}\neq\emptyset$, then $q\av_\a$ forces that for every $a\in N$ there is some $M\in\mtcl N^{\dot G_\a}_\a\cap\mtcl T_{\a+1}\cap N$ such that

\begin{enumerate}
\item $a\in M$ and
\item $\d_M\notin\bigcup\{\dot C^{\bar\a}_{\d_N}\,:\,\bar\a\in\Xi^{q, \a}_{\d_N}\}$.\footnote{It is worth noting that clauses (4)(b) and (5) only apply when $\beta=\alpha+1$. Also, notice that item (b) in (5) makes sense since, in the situation of this clause, every $\mtcl P_{\bar\a}$-name is itself a $\mtcl P_\a$-name by our working hypothesis.}
\end{enumerate}

 \item Suppose $\{(N_0,  \r_0), (N_1, \r_1)\}\in\mtcl G_q$, $\a\in \dom(F_q)\cap N_0\cap\r_0$, and $\bar\a=\Psi_{N_0, N_1}(\alpha)<\r_1$.
Then:

\begin{enumerate}
\item $\bar\a\in \dom(F_q)$;
\item $b^q_\a\cap N_0= b^q_{\bar\a}\cap N_1$;
\item $\Psi_{N_0, N_1}``d^q_\a= d^q_{\bar\a}\cap N_1$.
\end{enumerate}

%\item The following holds for every $\a\in N\cap \b$ such that $N\in\mtcl N^q_{\a+1}$.

\item The following holds for every $\a<\b$ and every $N\in\mtcl N^q_{\a+1}$.

\begin{enumerate}
\item For all $Q\in\mtcl N^q_{\a+1}\cap N$, and $(\d_0, \d_1)\in b^q_\a$, if $\d_1<\d_Q<\d_0$ and $\d_0<\d_N$, then there is some $p\in \mtcl P_\a\cap N$ such that $q\av_\a\leq_\a p$ and $p\Vdash_\a \d_Q\notin\dot C^\a_{\d_0}$.
\item For every $Q\in\mtcl N^q_{\a+1}\cap N$, if $\Xi^{(q\restr N)|_{\a+1}, \a}_{\d_Q}\neq\emptyset$, then there is some $p\in \mtcl P_\a\cap N$ such that $q\av_\a\leq_\a p$ and such that $p$ forces that for every $a\in Q$ there is some  $M\in\mtcl N^{\dot G_\a}_\a\cap\mtcl T_{\a+1}\cap Q$ with $a\in M$ and $\d_M\notin\bigcup\{\dot C^{\bar\a}_{\d_Q}\,:\,\bar\a\in\Xi^{(q\restr N)|_{\a+1}, \a}_{\d_Q}\}$.\footnote{Just to be clear, $\Xi^{(q\restr N)|_{\a+1}, \a}_{\d_Q}$ is of course the set of ordinals $\bar\a$ such that $(\d_Q, \bar\a)$ is $(\mtcl G_q)|_{\a+1}\cap N$-accessible from $(\d_Q, \a)$, $\bar\a\in\dom(F_q)\cap N$, and $\d_Q\in \dom(b^q_{\bar\a})$.}
\end{enumerate}

\end{enumerate}

Given $\mathcal P_\beta$-conditions $q_i$, for $i =0$, $1$, $q_1\leq_\b q_0$ if and only if the following holds.

\begin{enumerate}
\item $\dom(F_{q_0})\sub \dom(F_{q_1})$ and for every $\alpha\in \dom(F_{q_0})$,
\begin{enumerate}
\item $b^{q_0}_\alpha\sub b^{q_1}_\alpha$ and
\item $d^{q_0}_\alpha\sub d^{q_1}_\alpha$.
\end{enumerate}
\item $\mtcl G_{q_0}\sub\mtcl G_{q_1}$
\item For every $\{(N_0, \r_0), (N_1, \r_1)\}\in\mtcl G_{q_0}$ and $\a\in N_0\cap (\r_0+1)$, the following holds.
\begin{enumerate}
\item If $\Psi_{N_0, N_1}(\a)>\b$, then $\mtcl N^{q_1}_\a\cap N_0=\mtcl N^{q_0}_\a\cap N_0$.
\item If $\a\in \dom(F_{q_1})\cap\r_0$ and $\Psi_{N_0, N_1}(\a)\geq\b$, then:
\begin{enumerate}
\item if $b^{q_1}_\a\cap N_0\neq\emptyset$, then $\a\in\dom(F_{q_0})$ and $b^{q_1}_\a\cap N_0=b^{q_0}_\a\cap N_0$;
\item if $d^{q_1}_\a\cap N_0\neq\emptyset$, then $\a\in\dom(F_{q_0})$ and $d^{q_1}_\a\cap N_0=d^{q_0}_\a\cap N_0$.
\end{enumerate}
\end{enumerate}
\end{enumerate}

We will refer to clause (7) of the definition of $\mtcl P_\b$ holding for $q$ by saying that $q$ is \emph{$N$-saturated below $\b$}.

\begin{fact} $\leq_\b$ is a transitive relation. \end{fact}

\begin{proof}
Let $q_0$, $q_1$, $q_2\in\mtcl P_\b$ and suppose $q_1\leq_\b q_0$ and $q_2\leq_\b q_1$. In order to show that $q_2\leq_\b q_0$, it suffices to verify (3) as all other clauses are trivial. For this, let $\{(N_0, \r_0), (N_1, \r_1)\}\in\mtcl G_{q_0}$, $\a\in N_0\cap (\r_0+1)$ and $\bar\a=\Psi_{N_0, N_1}(\a)$, and let us assume that $\bar\a>\b$. We will prove that $\mtcl N^{q_2}_\a\cap N_0=\mtcl N^{q_0}_\a\cap N_0$. (The argument taking care of (3)(b) is the same.)

Since $\mtcl G_{q_0}\sub\mtcl G_{q_1}\sub\mtcl G_{q_2}$, by (3)(a) in the definition of $q_2\leq_\b q_1$ we have that $\mtcl N^{q_2}_\a\cap N_0=\mtcl N^{q_1}_\a\cap N_0$. Since $\mtcl N^{q_1}_\a\cap N_0 = \mtcl N^{q_0}_\a\cap N_0$ by (3)(a) in the definition of $q_1\leq_\b q_0$, we have that  $\mtcl N^{q_1}_\a\cap N_0=\mtcl N^{q_0}_\a\cap N_0$. Putting these two equalities together it follows that $\mtcl N^{q_2}_\a\cap N_0=\mtcl N^{q_0}_\a\cap N_0$.
\end{proof}

We still need to define $\Phi_{\b+1}$, and $\Phi_\b$ if $\b<\k$ is a nonzero limit ordinal.

Let $\Vdash_\b^*$ denote the restriction of the forcing relation $\Vdash_\b$ for $\mtcl P_\b$ to formulas involving only names in $H(\k)$. Then we let $\Phi_{\b+1}\sub H(\k)$ canonically code the satisfaction relation for the structure $$(H(\k); \Phi_\b, \mtcl P_\b, \Vdash_\b^*)$$

Finally, if $\b<\k$ is a nonzero limit ordinal, we let $\Phi_\b$ be a subset of $H(\k)$ canonically coding $(\Phi_\a\,:\,\a<\b)$.

We will assume that the definition of $(\Phi_\b\,:\,\b<\k)$ is uniform in $\b$.

Finally, we define $\mtcl P_{\k}=\bigcup_{\b<\k}\mtcl P_\b$.

\section{Proving Theorem \ref{mainthm-0}}\label{relevant_facts}

We will now prove the relevant lemmas that, together, will show $\mtcl P_\k$ to witness Theorem \ref{mainthm-0}.

Given partial orders $\mtbb P$ and $\mtbb Q$, we will say that $\mtbb P$ is a \emph{weak suborder of $\mtbb Q$} in case $\dom(\mtbb P)\sub\dom(\mtbb Q)$ and for all $p_0$, $p_1\in\dom(\mtbb P)$, if $p_1\leq_{\mtbb P} p_0$, then $p_1\leq_{\mtbb Q}p_0$. Thus, $\mtbb P$ is a suborder of $\mtbb Q$ in case it is a weak suborder of $\mtbb Q$ and for all $p_0$, $p_1\in\dom(\mtbb P)$ we have that if $p_1\leq_{\mtbb Q}p_0$, then $p_1\leq_{\mtbb P} p_0$.

It is clear that if $\mtbb P$ is a weak suborder of $\mtbb Q$, then every $\mtbb P$-name is itself also a $\mtbb Q$-name.

Our first two lemmas are obvious.

\begin{lemma}\label{suborders} For all $\a<\b\leq\k$, $\mtcl P_\a$ is a weak suborder of $\mtcl P_\b$.\footnote{This lemma shows, in particular, that for all $\a<\b$, every $\mtcl P_\a$-name is also a $\mtcl P_\b$-name, and hence that our construction $(\mtcl P_\b\,:\, \b\leq\k)$ is well-defined.}
 \end{lemma}

On the other hand, it is not true in general that for all $\a<\b$, $\mtcl P_\a$ is a suborder of $\mtcl P_\b$.\footnote{In fact, s.\ Remark \ref{pathology}.}

\begin{lemma}\label{definability-0} For every $\b<\k$, $\mtcl P_\b$ and $\Vdash_\b^*$ are uniformly (in $\b$) definable over the structure $(H(\k); \in, \Phi_{\b+1})$ without parameters. \end{lemma}

Given partial orders $\mtbb P$ and $\mtbb Q$, we will say that \emph{$\mtbb P$ is a weak complete suborder of $\mtbb Q$} in case $\mtbb P$ is a weak suborder of $\mtbb Q$ and every predense subset of $\mtbb P$ is also predense in $\mtbb Q$ (i.e., if $D\sub\mtbb P$ is predense in $\mtbb P$, then for every $q\in\mtbb Q$ there are $p\in D$ and $r\in\mtbb Q$ such that $r\leq_{\mtbb Q} p$ and $r\leq_{\mtbb Q}q$). Also, we will call a sequence $\la\mtbb P_\a\,:\,\a\leq\l\rangle$ of forcing notions a \emph{weak forcing iteration} if for all $\a<\b$, $\mtbb P_\a$ is a weak complete suborder of $\mtbb P_\b$.

Given partial orders $\mtbb P$ and $\mtbb Q$ such that $\mtbb P$ is a weak suborder of $\mtbb Q$, we call a function $\p:\mtbb Q\into \mtbb P$ a \emph{weak projection of $\mtbb Q$ onto $\mtbb P$} in case for every $q\in\mtbb Q$ and every condition $p\in\mtbb P$ such that $p\leq_{\mtbb P} \p(q)$ there is some $r\in \mtbb Q$ such that $r\leq_{\mtbb Q} p$ and $r\leq_{\mtbb Q} q$. In this situation $\mtbb P$ is clearly a weak complete suborder of $\mtbb Q$.

Our sequence $(\mtcl P_\b\,:\,\b\leq\k)$ is a weak forcing iteration. In fact, given $\a<\b\leq\k$, the function sending $q\in\mtcl P_\b$ to $q\av_\a$ is a weak projection of $\mtcl P_\b$ onto $\mtcl P_\a$. This is an immediate consequence of the following lemma, the proof of which is straightforward thanks to clause (3) in the definition of the extension relation $\leq_\a$.

\begin{lemma}\label{compl-0}
Let $\a<\b\leq\k$, let $q\in\mtcl P_\b$ and $r\in\mtcl P_\a$, and suppose $r\leq_\a q\av_\a$. Then $$(F_q  \cup F_r,  \mtcl G_q\cup\mtcl G_r)$$
is a condition in $\mtcl P_\b$ extending both $q$ and $r$ in $\mtcl P_\b$.
\end{lemma}

Given $\a<\b\leq\k$, $q\in\mtcl P_\b$, and $r\in\mtcl P_\a$ extending $q\av_\a$, we write $q\oplus r$ to denote the common extension $$( F_q \cup F_r,  \mtcl G_q\cup\mtcl G_r)$$
 of $q$ and $r$ defined in the statement of Lemma \ref{compl-0}.

Given an edge $\{(M_0, \g_0), (M_1, \g_1)\}$, we will write $$\la \{ (M_0, \g_0), (M_1, \g_1)\}\ra$$ to denote the $\sub$-least set of edges containing $\{(M_0, \g_0), (M_1, \g_1)\}$ and closed under restrictions, i.e, the set $$\{\{(M_0, \a_0), (M_1, \a_1)\}\,:\,\a_0\in M_0\cap (\g_0+1), \a_1\in M_1\cap (\g_1+1)\}$$

 \begin{remark}\label{pathology} As we have just seen, our construction is a weak forcing iteration, and in fact, given any $\a<\b\leq\k$, the function sending $q\in\mtcl P_\b$ to $q\av_\a$ is a weak projection of $\mtcl P_\b$ onto $\mtcl P_\a$. However, it is not an iteration in the usual sense. Actually, it is easy to find ordinals $\a<\b$ and conditions $q_0$, $q_1\in\mtcl P_\a$ such that $q_1\leq_\b q_0$ and yet $q_0$ and $q_1$ are actually incompatible in $\mtcl P_\a$. For example, for some high enough $\b$, we can consider $\mtcl P_\b$-conditions
$q_0= (\emptyset, \mtcl G_0)$ and $q_1=(\emptyset, \mtcl G_1)$, where
\begin{itemize}
\item $\mtcl G_0=\la \{(N_0, \r_0), (N_1, \r_1)\}\ra$,
\item $\mtcl G_1$ is the union of
\begin{itemize}
\item $\mtcl G_0$,
\item $\la\{(M, \r_0)\}\ra$ and
\item $\{\{(\Psi_{N_0, N_1}(M), \g)\}\,:\,\g\in \Psi_{N_0, N_1}(M)\cap\r_1\}$,
\end{itemize} \end{itemize}
\noindent and where $\r_0<\r_1$, $M\in N_0$, $(M, \r_0)$ is a model with marker, and $\Psi_{N_0, N_1}(\r_0)>\r_1$. Let $\a=\r_1$. Then $q_1\leq_\b q_0$ but $q_0$ and $q_1$
 are incompatible in $\mtcl P_\a$ since every $r\in\mtcl P_\a$ such that $r\leq_\a q_0$, $q_1$ would have to be such that $M\in \mtcl N^r_{\r_0}$ (since it would extend $q_1$) and $M\notin \mtcl N^r_{\r_0}$ (since it would extend $q_0$ and since $\Psi_{N_0, N_1}(\r_0)>\r_1$).
 \end{remark}

%If $q=(F_q, \mtcl G_q)$ is a condition and $N\sub H(\k)$, we denote by $q\restr N$ the ordered pair $(F_q\restr\restr N, \mtcl G_q\cap N)$, where $F_q\restr\restr N$ is the function with domain $\dom(F_q)\cap N$ such that $(F_q\restr\restr N)(\a)=(b^q_\a\cap N, d^q_\a\cap N)$ for each $\a\in \dom(F)\cap N$.

The following lemma will be used in the proofs of Lemmas \ref{properness2-0} and \ref{fnr-0}.

\begin{lemma}\label{transfer}
Let $\b<\k$ and $q\in\mtcl P_\b$. Suppose $\{(N_0, \r_0), (N_1, \r_1)\}\in\mtcl G_q$, $\a\in N_0\cap \r_0$, $\dot a\in N_0$ is a $\mtcl P_\a$-name, $\varphi(x)$ is a formula in the language of set theory, $(q\restr N_0)\av_\a\in\mtcl P_\a$, and $(q\restr N_0)\av_\a\Vdash_\a \varphi(\dot a)$. Suppose $\a^*:=\Psi_{N_0, N_1}(\a)<\r_1$. Then $\Psi_{N_0, N_1}((q\restr N_0)\av_\a)=(q\restr N_1)\av_{\a^*}\in\mtcl P_{\a^*}$, $\Psi_{N_0, N_1}(\dot a)$ is a $\mtcl P_{\a^*}$-name, and $(q\restr N_1)\av_{\a^*}\Vdash_{\a^*}\varphi(\Psi_{N_0, N_1}(\dot a))$.
\end{lemma}

\begin{proof}
By Lemma \ref{definability-0} and since $$\Psi_{N_0, N_1}:(N_0; \in, \Phi_{\a+1})\into (N_1; \in, \Phi_{\a^*+1})$$ is an isomorphism, we have that $\Psi_{N_0, N_1}((q\restr N_0)\av_\a)$ is a $\mtcl P_{\a^*}$-condition and $\Psi_{N_0, N_1}(\dot a)$ is a $\mtcl P_{\a^*}$-name. And since $(q\restr N_0)\av_\a\Vdash_\a\varphi(\dot a)$, we also have that $$\Psi_{N_0, N_1}((q\restr N_0)\av_\a)\in\mtcl P_{\a^*}$$ and $$\Psi_{N_0, N_1}((q\restr N_0)\av_\a)\Vdash_{\a^*} \varphi(\Psi_{N_0, N_1}(\dot a))$$ again by Lemma \ref{definability-0} and the fact that $$\Psi_{N_0, N_1}:(N_0; \in, \Phi_{\a+1})\into (N_1; \in, \Phi_{\a^*+1})$$ is an isomorphism. Finally, clause (6) in the definition of condition, and the closure of $\mtcl G_q$ under copying, together entail that $$\Psi_{N_0, N_1}((q\restr N_0)\av_\a)=(q\restr N_1)\av_{\a^*}.$$
\end{proof}

\subsection{Properness and $\aleph_2$-c.c.}\label{subsection-properness-0} The goal of this subsection is to show both the properness and the $\al_2$-chain condition of all members $\mtcl P_\b$ of our construction.
 Our first lemma shows, given a $\mtcl P_\b$-condition $q$ and an edge $\{(N_0, \r_0), (N_1, \r_1)\}$ below $\b$ such that $q\in N_0\cap N_1$, how to add $\{(N_0, \r_0), (N_1, \r_1)\}$ to $q$.

\begin{lemma}\label{properness1-0}
Let $\b<\k$, $q\in \mtcl P_\b$, and let $\{(N_0, \r_0), (N_1, \r_1)\}$ be an edge below $\b$ such that $q\in N_0\cap N_1$. Let $\mtcl G^*$ be the union of $\mtcl G_q$ and $\la\{(N_0, \r_0), (N_1, \r_1)\}\ra$. Then $q^*=(F_q, \mtcl G^*)$ is a condition in $\mtcl P_\b$ extending $q$.
%Then there is an extension $q^*\in\mtcl P_\b$ of $q$ such that $\{(N_0, \r_0), (N_1, \r_1)\}\in\mtcl G_{q^*}$.
\end{lemma}

\begin{proof}
This is immediate since $\mtcl G^*$ is the $\sub$-minimal sticky set of edges closed under restrictions and such that $\mtcl G_q\cup\{(N_0, \r_0), (N_1, \r_1)\}\sub\mtcl G^*$.
% ($\mtcl G^*$ is simply the union of $\mtcl G_q$ and $\la\{(N_0, \r_0), (N_1, \r_1)\}\ra$). It is then straightforward to see that $q^*=(F_q, \mtcl G^*)$ is an extension as desired.
\end{proof}

The proof of the following lemma is the same as that of the previous lemma.

\begin{lemma}\label{properness1-0+}
Let $\b^*\leq\k$, $q\in \mtcl P_\b$, and $N\elsub H(\k)$ such that $N\in\mtcl T_{\b+1}$ for every $\b\in N\cap\b^*$. Suppose $q\in N$. Then there is an extension $q^*\in\mtcl P_{\b^*}$ of $q$ such that $\{(N, \b)\}\in\mtcl G_{q^*}$ for every $\b\in N\cap\b^*$.
\end{lemma}

It will be convenient to prove the $\al_2$-chain condition and our main properness result in the same lemma, by a simultaneous induction. This will be the content of Lemma \ref{properness2-0}. Before getting there, it will be useful to introduce some pieces of notation and some technical lemmas.

The following lemma, which is immediate, asserts a useful interpolation property of the extension relation.

\begin{lemma}\label{restr-extend}
Let $\b<\k$, $q\in\mtcl P_\b$, and $N\in\mtcl N^q_0$. Suppose $q\restr N\in\mtcl P_\b$, and let $p\in\mtcl P_\b\cap N$ be a condition such that $q\leq_\b p$. Then $q\leq_\b q\restr N$ and $q\restr N\leq_\b p$.
\end{lemma}

%Given $\b<\k$, a $\mtcl P_\b$-condition $q$, and $N\in\mtcl N^q_0$, we will say that $q$ is \emph{$N$-saturated below $\b$} if the following holds for every $\a\in N\cap \b$ such that $N\in\mtcl N^q_{\a+1}$.

%\begin{enumerate}
%\item For all $Q\in\mtcl N^q_{\a+1}\cap N$, and $(\d_0, \d_1)\in b^q_\a$, if $\d_1<\d_Q<\d_0$ and $\d_0<\d_N$, then there is some $p\in \mtcl P_\a\cap N$ such that $q\av_\a\leq_\a p$ and $p\Vdash_\a \d_Q\notin\dot C^\a_{\d_0}$.
%\item For every $Q\in\mtcl N^q_{\a+1}\cap N$, if $\Xi^{(q\restr N)|_{\a+1}, \a}_{\d_Q}\neq\emptyset$, then there is some $p\in \mtcl P_\a\cap N$ such that $q\av_\a\leq_\a p$ and such that $p$ forces that for every $a\in Q$ there is some  $M\in\mtcl N^{\dot G_\a}_\a\cap\mtcl T_{\a+1}\cap Q$ with $a\in M$ and $\d_M\notin\bigcup\{\dot C^{\bar\a}_{\d_Q}\,:\,\bar\a\in\Xi^{(q\restr N)|_{\a+1}, \a}_{\d_Q}\}$.\footnote{Just to be clear, $\Xi^{(q\restr N)|_{\a+1}, \a}_{\d_Q}$ is of course the set of ordinals $\bar\a$ such that $(\d_Q, \bar\a)$ is $(\mtcl G_q)|_{\a+1}\cap N$-accessible from $(\d_Q, \a)$, $\bar\a\in\dom(F_q)\cap N$, and $\d_Q\in \dom(b^q_{\bar\a})$.}
%\end{enumerate}

\begin{lemma}\label{restr-fine}
Let $\b<\k$, $q\in\mtcl P_\b$, and $N\in\mtcl N^q_\b$.
% If $q$ is $N$-saturated below $\b$,
Then $q\restr N\in\mtcl P_\b$.
\end{lemma}

\begin{proof}
We prove, by induction on $\a\leq\b$, that $$(q\restr N)\av_\a:=((F_q\restr\restr N)\restr\a, (\mtcl G_q\cap N)\av_\a)$$ is a condition in $\mtcl P_\a$.

Clause (1) in the definition of condition holds for $(q\restr N)\av_\a$ due to the fact that if $\mtcl N$ is a symmetric system and $M\in\mtcl N$, then $\mtcl N\cap M$ is also a symmetric system. Clauses (2), (6) and (7) are trivial, and clause (3) follows from the induction hypothesis. All subclauses in (4) except for (4)(b) are trivial. Finally, (4)(b) holds by clause (a) in the definition of $N$-saturatedness below $\b$ together with Lemma \ref{restr-extend},
and (5) holds by clause (b) in the definition of $N$-saturatedness below $\b$ together with, again, Lemma
\ref{restr-extend}.
\end{proof}

We will also need the following technical lemma, which is an immediate consequence of Lemma \ref{amalg-0+}.

\begin{lemma}\label{clause-1}
Let $\a<\b<\k$, $q\in\mtcl P_\b$, $N\in\mtcl N^q_0$, $t\in \mtcl P_\b\cap N$, and suppose $q\restr N\in\mtcl P_\b$ and $t\leq_\b q\restr N$.\footnote{The hypothesis that  $q\restr N\in\mtcl P_\b$ is actually not needed; if we drop it, then $t\leq_\b q\restr N$ needs to be replaced by a hypothesis to the effect that the relevant forms of clauses (1) and (2) in the definition of $\leq_\b$ hold between $t$ and $q\restr N$.} Suppose for every $Q\in\mtcl N^{\D(\mtcl G_q)}\cap N$, $Q\cap\mtcl G_t=Q\cap\mtcl G_q$. Let $p\in\mtcl P_\a$, and suppose $p\leq_\a q\av_\a$ and $p\leq_\a t\av_\a$. Let $q'=q\oplus p$ and let $\mtcl G=\mtcl G_{q'}\oplus\mtcl G_t$.
Then $\mtcl G$ is a
sticky
set of edges closed under restrictions and under copying and such that $\mtcl N^{\D(\mtcl G)}_0$ is a $\Phi_0$-symmetric system and $\mtcl N^{\D(\mtcl G)}_{\a+1}$ is a $\Phi_{\a+1}$-symmetric system for every $\a<\b$.
\end{lemma}

\begin{proof}
This is by an application of Lemma \ref{amalg-0+} with $\mtcl G_{q'}$ and $\mtcl G_{t'}$, where $t'=t\oplus (p\restr N)$.
\end{proof}

Given a set $\mtcl G$ of edges and a pertinent function $F$ such that $\dom(F)\sub\bigcup\dom(\D(\mtcl G))$, we define \emph{the closure of $F$ via edges coming from $\mtcl G$} to be the function $F^*$ with domain the set $X$ of ordinals of the form $\Psi_{\vec{\mtcl E}}(\a)$, for some $\a\in \dom(F)$ and some connected $\mtcl G$-thread $\langle \a, \vec{\mtcl E}\rangle$, defined by letting $F^*(\bar\a)$ be, for every $\bar\a\in X$, the ordered pair $(b^{F^*}_{\bar\a}, d^{F^*}_{\bar\a})$, where:

\begin{itemize}
\item $b^{F^*}_{\bar\a}=b^F_{\bar\a}\cup b^{F'}_{\bar\a}$,\footnote{Recall that $b^F_{\bar\a}$ is defined to be $\emptyset$ if $\bar\a\notin \dom(F)$. And a similar remark applies to the next bullet point.} where $b^{F'}_{\bar\a}$ is the union of the collection of sets of the form $\Psi_{\vec{\mtcl E}}``b^F_\a$, for some $\a\in \dom(F)$ and some connected $\mtcl G$-thread $\langle \a, \vec{\mtcl E}\rangle$ with $\bar\a=\Psi_{\vec{\mtcl E}}(\a)$;\footnote{$\Psi_{\vec{\mtcl E}}``b^F_\a$ is of course $b^F_\a\restr\min(\d_{\vec{\mtcl E}})$.
}

\item $d^{F^*}_{\bar\a}=d^F_{\bar\a}\cup d^{F'}_{\bar\a}$, where $d^{F'}_{\bar\a}$ is the union of the collection of sets of the form $\Psi_{\vec{\mtcl E}}``d^F_\a$, for some $\a\in \dom(F)$ and some connected $\mtcl G$-thread $\langle \a, \vec{\mtcl E}\rangle$ with $\bar\a=\Psi_{\vec{\mtcl E}}(\a)$.
\end{itemize}

We will denote this function $F^*$ by $\cl_{\mtcl G}(F)$.

Also, given pertinent functions $F_0$ and $F_1$ and given $\a\in \dom(F_0)\cap\dom(F_1)$, let $F_0(\a)+F_1(\a)$ denote $$(b^{F_0}_\a\cup b^{F_1}_\a, d^{F_0}_\a\cup d^{F_1}_\a).$$

We will then denote by $F_0+F_1$ the function $F$ with domain $\dom(F_0)\cup\dom(F_1)$ defined by letting
\begin{itemize}
\item $F(\a)=F_\epsilon(\a)$ for all $\epsilon\in\{0, 1\}$ and $\a\in\dom(F_\epsilon)\setminus \dom(F_{1-\epsilon})$ and
\item $F(\a)=F_0(\a)+F_1(\a)$ for all $\a\in\dom(F_0)\cap\dom(F_1)$.
\end{itemize}

Given a countable elementary substructure $N$ of $H(\k)$ and a $\mtcl P_\beta$-condition $q$, for some $\b<\k$, we will say that \emph{$q$ is potentially $(N, \mtcl P_\beta)$-generic} if and only if for every maximal antichain $A$ of $\mtcl P_\b$ such that $A\in N$ and every $q'\in\mtcl P_\b$ such that $q'\leq_\b q$ there is some $r\in A$ and some $q^*\in\mtcl P_\b$ such that $q^*\leq_\b r$ and $q^*\leq_{\b^\dag} q'$ for some $\b^\dag\geq\b$.  Note that this, even in the stronger version in which $\b^\dag$ is required to be $\b$, is more general than the standard notion of $(N, \mtbb P)$-genericity, for a forcing notion $\mtbb P$, which applies only if $\mtbb P\in N$. Indeed, in our situation $\mtcl P_\beta$ is of course never a member of $N$ if $N\sub H(\k)$.

We are now ready to prove the main lemma in this subsection.

\begin{lemma}\label{properness2-0} The following holds for every $\b\leq\k$.

\begin{enumerate}
\item $\mtcl P_\b$ is $\al_2$-Knaster.
\item If $\b<\k$, then for every $q\in\mtcl P_\b$ and $N\in \mtcl N^q_\b\cap\mtcl T_{\b+1}$, $q$ is potentially $(N, \mtcl P_\b)$-generic.
\end{enumerate}
\end{lemma}

\begin{proof} We prove (1) and (2) by simultaneous induction on $\b<\k$.

We start with the proof of (1). We prove that if $(q_\n\,:\,\n<\o_2)$ is a sequence of $\mtcl P_\b$-conditions, then there is $I\in [\o_2]^{\al_2}$ such that $q_{\n_0}$ and $q_{\n_1}$ are compatible in $\mtcl P_\b$ for all $\n_0$, $\n_1\in I$. Let $M_\n^*$ be, for each $\n<\o_2$, a countable elementary submodel of $H(\k^+)$ such that $\vec\Phi_\b$, $q_\n\in M_\n^*$ and let $M_\n=M_\n^*\cap H(\k)$.

By $\CH$, we may find $I\in [\o_2]^{\al_2}$ and some countable $R$ such that $M_{\n_0}\cap M_{\n_1}=R$ for all distinct $\n_0$, $\n_1$ in $I$. Again by $\CH$, and after shrinking $I$ if necessary, we may assume in addition that, for some $n$, $m<\o$, there are, for all $\nu\in I$, enumerations $(N^{\nu}_i\,:\, i<n)$ and $(\x^\nu_j\,:\,j<m)$ of $\mtcl N^{q_\nu}_0$ and $\dom(F_{q_\nu})$, respectively, such that
for all $\nu_0\neq \nu_1$ in $I$ there is an isomorphism $\Psi$ between $\mtcl M_{\n_0}$ and $\mtcl M_{\n_1}$ fixing $M_{\n_0}\cap M_{\n_1}$, where, given any $\n\in I$, $\mtcl M_\n$ is some canonically chosen structure with universe $M_\n$ coding
$R$, $(N^{\nu}_i\,:\, i<n)$, $\mtcl G_{q_\n}$, $(\x^\nu_j\,:\,j<m)$, $((b^{q_\n}_{\x^\n_j},  d^{q_\n}_{\x^\n_j})\,:\, j<m)$, and $\vec\Phi_\b\cap M_{\nu}$.

We may moreover assume that $(\a_{\n_0}; \in, \p_{\nu_0}``R)\cong (\a_{\n_1}; \in, \pi_{\nu_1}``R)$, where $\a_{\n_i}\in\o_1$ is the Mostowski collapse of $M_{\n_i}\cap \Ord$ and $\pi_{\nu_i}$ is the corresponding collapsing function. But then we have that $\Psi$ is the identity on $R\cap\Ord$. This yields that $\Psi$ is the identity on $R\cap H(\k)$ since the function $\Phi:\kappa \into H(\kappa)$ is surjective.

Let us now pick $\n_0\neq\n_1$ in $I$. We will prove that $$q^*:=((F_{q_{\n_0}}+F_{q_{\n_1}}), (\mtcl G_{q_{\n_0}}\oplus \mtcl G_{q_{\n_1}})\cup \la \{(M_{\n_0}, \b), (M_{\n_1}, \b)\}\ra)$$ is a condition in $\mtcl P_\b$ extending both $q_{\n_0}$ and $q_{\n_1}$. For this, we will prove, by induction on $\a\leq\b$, that $$q^*\av_\a:=((F_{q_{\n_0}}+F_{q_{\n_1}})\restr \a, (\mtcl G_{q_{\n_0}}\oplus \mtcl G_{q_{\n_1}})\av_\a\cup \la \{(M_{\n_0}, \b), (M_{\n_1}, \b)\}\ra\av_\a)$$ is a condition in $\mtcl P_\a$ such that $q^*\av_\a\leq_\a q_{\n_0}\av_\a$ and $q^*\av_\a\leq_\a q_{\n_1}\av_\a$.

 Clause (1) in the definition of $\mtcl P_\a$-condition holds thanks to Lemma \ref{amalg-1+}, together with Lemma \ref{amalg-restr-sym} in the case $\a<\b$. Clause (2) is trivial by construction of the function $F_{q_{\n_0}}+F_{q_{\n_1}}$, and (3) is true by the induction hypothesis. All subclauses of (4) except for (4)(b) are true by construction of $F_{q_0}+F_{q_1}$, and (4)(b) holds by the induction hypothesis. (6) follows from the fact that $\Psi$ is an isomorphism between $\mtcl M_{\n_0}$ and $\mtcl M_{\n_1}$, and (7) is immediate from the construction of $q^*$ and the present induction hypothesis.

Finally, for clause (5), suppose $\a=\a_0+1$. It is enough to prove that if $N\in\mtcl N^{q_{\n_0}}_\a$, $\Xi^{q^*\av_{\a_0+1}, \a_0}_{\d_N}\neq\emptyset$, $a\in N$, and $q\in\mtcl P_{\a_0}$ is such that $q\leq_{\a_0} q^*\av_{\a_0}$, then there is some $q'\leq_{\a_0} q$ and some $M\in\mtcl N^{q'}_{\a_0}\cap \mtcl T_{\a_0+1}\cap N$ such that $a\in M$ and $q'\Vdash_{\a_0}\d_M\notin
 \bigcup\{\dot C^{\bar\a}_{\d_N}\,:\,\bar\a\in\Xi^{q^*\av_{\a_0+1}, \a_0}_{\d_N}\}$.

 We may assume that $\a_0\in M_{\n_0}$ (the proof when $\a_0\in M_{\n_1}$ is completely symmetrical to the proof in the present case). Let us first consider the case when $\a_0\leq\Psi(\a_0)$. Let $q'\leq_{\a_0} q$ and $M\in\mtcl N^{q'}_{\a_0}\cap \mtcl T_{\a_0+1}\cap N$ such that $a\in M$ and $$q'\Vdash_{\a_0}\d_M\notin
 \bigcup\{\dot C^{\bar\a}_{\d_N}\,:\,\bar\a\in\Xi^{(q_{\n_0})\av_{\a_0+1}, \a_0}_{\d_N}\}$$ Such $q'$ and $M$ exist since, if $\Xi^{q^*\av_{\a_0+1}, \a_0}_{\d_N}\setminus \Xi^{(q_{\n_0})\av_{\a_0+1}, \a_0}_{\d_N}\neq\emptyset$, then we have that $\Xi^{(q_{\n_1})\av_{\Psi(\a_0)+1}, \Psi(\a_0)}_{\d_N}\neq\emptyset$ (since $\a_0\leq\Psi(\a_0)$), and therefore $\Xi^{(q_{\n_0})\av_{\a_0+1}, \a_0}_{\d_N}\neq\emptyset$ as $\Psi$ is an isomorphism between $\mtcl M_{\n_0}$ and $\mtcl M_{\a_1}$. Let $\bar\a\in \Xi^{q^*\av_{\a_0+1}, \a_0}_{\d_N}\setminus \Xi^{(q_{\n_0})\av_{\a_0+1}, \a_0}_{\d_N}$. We will be done in this case if we can show that $q'\Vdash_{\a_0}\d_M\notin \dot C^{\bar\a}_{\d_N}$. Let $\a_*=\Psi^{-1}(\bar\a)$ and let us note that $\a_*   \leq \a_0$ since $\bar\a\leq\Psi(\a_0)$. Since also $\a_*\in\Xi^{(q_{\n_0})\av_{\a_0+1}, \a_0}_{\d_N}$, we have that $q'\Vdash_{\a_0}\d_M\notin \dot C^{\a_*}_{\d_N}$. Suppose now that $\a_*\leq \bar\a$ (the case $\bar\a<\a_*$ is proved similarly, by reversing the roles of $M_{\n_0}$ and $M_{\n_1}$ in the following argument). Now we note that $\{(M_{\n_0}, \a_*), (M_{\n_1}, \bar\a)\}\in\mtcl G_{q'}$ and therefore, by (2) of our induction hypothesis for $\bar\a$, $q'\av_{\bar\a}$ is potentially $(M_{\n_1}, \mtcl P_{\bar\a})$-generic. Hence, for every $\x<\d_N$, every $r\leq_{\bar\a} q'$ is $\mtcl P_{\bar\a^\dag}$-compatible, for some $\bar\a^\dag\geq\bar\a$, with some condition in $M_{\n_1}$ deciding whether or not $\x\in\dot C^{\bar\a}_{\d_N}$.
 % It follows from this, together with Lemmas \ref{compl-0} and \ref{transfer} and the fact that $\{(M_{\n_0}, \a_*), (M_{\n_1}, \bar\a)\}\in\mtcl G_{q'}$, that 
 
 \begin{claim}\label{equality} 
 $q'\Vdash_{\a_0}\dot C^{\a_*}_{\d_N}=\dot C^{\bar\a}_{\d_N}$. 
 \end{claim}

\begin{proof}
Let $r\leq_{\bar\a} q'$, $\x<\d_N$,  suppose $r\Vdash_{\a_0}\x\in \dot C^{\bar\a}_{\d_N}$, and let us show that  $r\not\Vdash_{\a_0}\x\notin\dot C^{\a_*}_{\d_N}$ (arguing symmetrically we can show that if $r\Vdash_{\a_0}\x\notin \dot C^{\bar\a}_{\d_N}$, then $r\not\Vdash_{\a_0}\x\in\dot C^{\a_*}_{\d_N}$). Let $s\in M_{\n_1}$ be a $\mtcl P_{\bar\a^\dag}$-condition, for some $\bar\a^\dag\geq\bar\a$, which is compatible with $r$ in $\mtcl P_{\bar\a^\dag}$ and decides whether or not $\x\in\dot C^{\bar\a}_{\d_N}$. Since obviously also $r\Vdash_{\bar\a^\dag}\x\in\dot C^{\bar\a}_{\d_N}$, we must have that $s\Vdash_{\bar\a^\dag}\x\in\dot C^{\bar\a}_{\d_N}$, and since $\dot C^{\bar\a}_{\d_N}$ is a $\mtcl P_{\bar\a}$-name, we in fact have that $s\av_{\bar\a}\Vdash_{\bar\a}\x\in\dot C^{\bar\a}_{\d_N}$. Let $q''$ be a common extension of $r\av_{\bar\a}$ and $s\av_{\bar\a}$ in $\mtcl P_{\bar\a}$.  Since  $\{(M_{\n_0}, \a_*), (M_{\n_1}, \bar\a)\}\in\mtcl G_{q''}$, $q''$ extends $\Psi_{N_0, N_1}(s\av_{\bar\a})$. But $\Psi_{N_0, N_1}(s\av_{\bar\a})\Vdash_{\a_*}\x\in \dot C^{\a_*}_{\d_N}$ by Lemma  \ref{transfer}, from which it follows that $q''\Vdash_{\a_*}\x\in \dot C^{\a_*}_{\d_N}$. Since $q''\av_{\a_*}\leq_{\a^*}r\av_{\a^*}$, we in particular have that $r\av_{\a^*}\not\Vdash_{\a^*}\x\notin\dot C^{\a_*}_{\d_N}$, and therefore $r\not\Vdash_{\a_0}\x\notin\dot C^{\a_*}_{\d_N}$ (if $r\Vdash_{\a_0}\x\notin\dot C^{\a_*}_{\d_N}$, then we would have that also $r\av_{\a_*}\Vdash_{\a_*}\x\notin\dot C^{\a_*}_{\d_N}$ since $\dot C^{\a_*}_{\d_N}$ is a $\mtcl P_{\a_*}$-name).
 \end{proof}
 
 The above claim finishes the proof in this case since $q'\Vdash_{\a_0}\d_M\notin \dot C^{\a_*}_{\d_N}$.

The second case is when $\Psi(\a_0)<\a_0$. Since we may of course assume that $\Xi^{q^*\av_{\a_0+1}, \a_0}_{\d_N}\setminus \Xi^{(q_{\n_0})\av_{\a_0+1}, \a_0}_{\d_N}\neq\emptyset$, we in fact have that  $\Xi^{q^*\av_{\a_0+1}, \Psi(\a_0)}_{\d_N}\setminus \Xi^{(q_{\n_0})\av_{\a_0+1}, \a_0}_{\d_N}\neq\emptyset$, so it makes sense to define $\alpha_1$ as the maximum ordinal in $\Xi^{(q_{\n_1})\av_{\a_0+1}, \Psi(\a_0)}_{\d_N}$. 

Since $\Xi^{q^*\av_{\a_0+1}, \a_0}_{\d_N}\setminus \Xi^{(q_{\n_0})\av_{\a_0+1}, \a_0}_{\d_N}\neq\emptyset$, there is some $\g\in R$ such that $(\d_N, \g)$ is $\mtcl G_{q_{\n_0}}$-accessible from $(\d_N, \a_0)$ and $\mtcl G_{q_{\n_1}}$-accessible form $(\d_N, \a_1)$. Using suitable instances of the shoulder axiom as in the proof of Lemma \ref{amalg-0+} we may then find sequences 
$$\vec{\mtcl E}_0=(\la (N^{i, 0}_0, \r^{i, 0}_0), (N^{i, 0}_1, \r^{i, 0}_1)\ra\,:\, i\leq n_0)$$ and $$\vec{\mtcl E}_1=(\la (N^{i, 1}_0, \r^{i, 1}_0), (N^{i, 1}_1, \r^{i, 1}_1)\ra\,:\, i\leq n_1)$$ such that $\la\a_0, \vec{\mtcl E}_0\ra$ is a connected $\mtcl G_{q_{\n_0}}$-thread with $\Psi_{\vec{\mtcl E}_0}(\a_0)=\g$,  $\la\g, \vec{\mtcl E}_1\ra$ is a connected $\mtcl G_{q_{\n_1}}$-thread with $\Psi_{\vec{\mtcl E}_1}(\a_0)=\a_1$, $\min(\d_{\vec{\mtcl E}_0})=\d_N$, $N^{0, 0}_0=N$, and $N':=N^{1, 1}_{n_1}$ is such that $\d_{N'}=\d_N$.\footnote{Note that we can indeed proceed here as in the proof of Lemma 2.7 (more specifically, as in the verification of the shoulder axiom at the successor stages of that construction) since the definition of pertinent function implies that $\a_0$ and $\a_1$ are successor ordinals.}  
Letting then $\vec{\mtcl E}$ be the concatenation of $\vec{\mtcl E}_0$ and $\vec{\mtcl E}_1^{-1}$, we have that $\la\a_0, \vec{\mtcl E}\ra$ is a connected $\mtcl G_{q^*\av_\a}$-thread with $\Psi_{\vec{\mtcl E}}(\a_0)=\a_1$.
% we may find $N'\in \mtcl N^{q_{\n_1}}_{\a_1+1}$ such that $\d_{N'}=\d_N$, together with -- using suitable instances of the shoulder axiom as in the proof of Lemma \ref{amalg-0+} -- some sequence $$\vec{\mtcl E}=(\la (N^i_0, \r^i_0), (N^i_1, \r^i_1)\ra\,:\, i\leq n)$$ such that $N^0_0=N$, $N^1_n=N'$, $\min(\d_{\vec{\mtcl E}})=\d_N$, and such that $\la\a_0, \vec{\mtcl E}\ra$ is a connected $\mtcl G_{q^*\av_\a}$-thread with $\Psi_{\vec{\mtcl E}}(\a_0)=\a_1$.
 %Again using that $\Xi^{q^*\av_{\a_0+1}, \a_0}_{\d_N}\setminus \Xi^{(q_{\n_0})\av_{\a_0+1}, \a_0}_{\d_N}\neq\emptyset$, we may find $N'\in \mtcl N^{q_{\n_1}}_{\a_1+1}$ such that $\d_{N'}=\d_N$, together with -- using suitable instances of the shoulder axiom as in the proof of Lemma \ref{amalg-0+} -- some sequence $$\vec{\mtcl E}=(\la (N^i_0, \r^i_0), (N^i_1, \r^i_1)\ra\,:\, i\leq n)$$ such that $N^0_0=N$, $N^1_n=N'$, $\min(\d_{\vec{\mtcl E}})=\d_N$, and such that $\la\a_0, \vec{\mtcl E}\ra$ is a connected $\mtcl G_{q^*\av_\a}$-thread with $\Psi_{\vec{\mtcl E}}(\a_0)=\a_1$.\footnote{Note that we can proceed here as in the proof of Lemma 2.7 (more specifically, as in the verification of the shoulder axiom at the successor stages of that construction) since the definition of pertinent function implies that $\a_0$ and $\a_1$ are successor ordinals.}  
Since $N'\in\mtcl N^{q_{\n_1}}_{\a_1+1}$, by an instance of clause (7)(b) in the definition of condition for $q_{\n_1}$ together with Lemma \ref{compl-0}, we may find $q'\leq_{\a_0} q$ and $M'\in\mtcl N^{q'}_{\a_1}\cap \mtcl T_{\a_1+1}\cap N'$ such that $\Psi_{\vec{\mtcl E}}(a)\in M'$ and $$q'\av_{\a_1}\Vdash_{\a_1}\d_{M'}\notin
 \bigcup\{\dot C^{\bar\a}_{\d_N}\,:\,\bar\a\in\Xi^{(q_{\n_1})\av_{\a_1+1}, \a_1}_{\d_N}\}$$  Let $M=\Psi_{\vec{\mtcl E}}^{-1}(M')\in N$ and let us note that $M\in\mtcl N^{q'}_{\a_0}\cap\mtcl T_{\a_0+1}\cap N$ and $a\in M$. It thus suffices to prove that $q'\Vdash_{\a_0}\d_M\notin\dot C^{\bar\a}_{\d_N}$ for every $\bar\a\in\Xi^{q^*\av_{\a_0+1}, \a_0}_{\d_N}$. If $\bar\a\in\Xi^{(q_{\n_1})\av_{\a_0+1}, \Psi(\a_0)}_{\d_N}$, then we are clearly done since then $\bar\a\leq\a_1$. Hence, we may assume $\bar\a\in \Xi^{(q_{\n_0})\av_{\a_0+1}, \a_0}_{\d_N}\setminus \Xi^{(q_{\n_1})\av_{\a_0+1}, \Psi(\a_0)}_{\d_N}$.  Let $\a_*=\Psi(\bar\a) \leq \a_1$ and let us note that $\a_*\in\Xi^{(q_{\n_1})_{\a_1+1}, \a_1}_{\d_N}$. It thus follows that $q'\av_{\a_1}\Vdash_{\a_1}\d_M\notin\dot C^{\a_*}_{\d_N}$.  But now, arguing as in the proof of Claim \ref{equality}, using the fact that $\{(M_{\n_0}, \bar\a), (M_{\n_1}, \a_*)\}\in \mtcl G_{q'}$ and the induction hypotheses for either $\bar\a$ or $\a_*$, we get that $q'\Vdash_{\bar\a} \dot C^{\bar\a}_{\d_N}=\dot C^{\a_*}_{\d_N}$. This finishes the proof in this case since $q'\Vdash_{\a_0}\d_M\notin \dot C^{\a_*}_{\d_N}$.

Now that we know that $q^*\av_\a$ is a $\mtcl P_\a$-condition, it is easy to check that it extends both $q_{\n_0}\av_\a$ and $q_{\n_1}\av_\a$ in $\mtcl P_\a$. The only point that is not completely trivial is the verification of clause (3) in the definition of the extension relation. But this clause holds thanks to the fact that $q_{\n_0}$ and $q_{\n_1}$ carry the same information on $R$.

We will now prove (2). For this, it is enough to show that if $A\in N$ is a maximal antichain of $\mtcl P_\b$, then there is some $\b^\dag\geq\b$ such that $q$ is $\leq_{\b^\dag}$-compatible with some condition in $A\cap N$.\footnote{This is of course the same thing as showing that there is some $r^*\in A\cap N$ and some $q^*\in\mtcl P_\b$ such that $q^*\leq_\b r^*$ and $q^*\leq_{\b^\dag} q$.}
The case $\b=0$ follows at once from Lemma \ref{amalg-0}, so we will assume in what follows that $\b>0$. By extending $q$ if necessary we may, and will, assume that $q$ extends some $r_0\in A$.
%By repeated applications of the induction hypothesis together with Lemma \ref{compl-0} we may assume, by further extending $q$, that $q$ is $Q$-saturated below $\b$ for every $Q\in\mtcl N^q_0$.

Let us first consider the case that $\b=\a+1$. Suppose $\Xi^{q, \a}_{\d_N}\neq\emptyset$.
Let $\dot B$ be a $\mtcl P_\a$-name for a (partially defined) function on $\o_1 \times A$
sending $(\eta, r)$ to some condition $t\in\mtcl P_\b$ with the following properties (provided there is some such $t$; otherwise the function is not defined at $(\eta, r)$).

\begin{enumerate}
\item $t\av_\a\in \dot G_\a$
\item $t$ extends $r$.
%\item $t$ is $Q$-saturated below $\b$ for every $Q\in\mtcl N^t_0$.
\item $t$ extends $q\restr N$.\footnote{We note that, by the assumption that $q$ be $N$-saturated below $\b$, $q\restr N$ is actually a $\mtcl P_\a$-condition. This, however, is not an essential point; one could in fact phrase this condition alternatively, without using the fact that $q\restr N\in\mtcl P_\a$.}

\item For every $Q\in\mtcl N^t_{\a+1}$, if $\d_Q\neq\d_{Q'}$ for any $Q'\in\mtcl N^q_{\a+1}$, then $\d_Q>\eta$.
\item For every $Q\in\mtcl N^q_0\cap N$, $Q\cap\mtcl G_q=Q\cap\mtcl G_t$, $Q\cap b^t_\a=Q\cap b^q_\a$, and $Q\cap d^t_\a=Q\cap d^q_\a$.

 \end{enumerate}

By conclusion (1) for $\b$ -- which we have already proved -- we know that $\mtcl P_\b$ has the $\al_2$-c.c.\ and hence we may assume that $\dot B\in H(\k)$. Hence, by Lemma \ref{definability-0} and since $N\elsub (H(\k); \in, \Phi_{\b+1})$ and $A\in N$, we may assume that $\dot B\in N$.

 By an instance of clause (5) in the definition of $\mtcl P_\b$-condition, together with the openness of $\bar\d\setminus\dot C^{\bar\a}_{\bar\d}$ in $V^{\mtcl P_\a}$ for all $\bar\a\leq \a$ and $\bar\d<\o_1$,\footnote{Which follows from the openness of $\bar\d\setminus\dot C^{\bar\a}_{\bar\d}$ in $V^{\mtcl P_\a}$ together with Lemma \ref{compl-0}.} there is an extension $p\in\mtcl P_\a$ of $q\av_\a$ for which there are
 $M\in\mtcl N^p_\a\cap\mtcl T_{\a+1}\cap N$ and $\eta<\d_M$ such that

 \begin{enumerate}
 \item $A$, $\dot B$, $q\restr N\in M$,
 \item $p\Vdash_\a[\eta,\,\d_N]\cap \dot C^{\bar\a}_\d=\emptyset$ whenever $\bar\a$ is such that $(\d_N, \bar\a)$ is $\mtcl G_q$-accessible from $(\d_N, \a)$ and there is $(\d, \bar\d)\in b^q_{\bar\a}$ such that $\bar\d<\d_N<\d$, and
  \item $p\Vdash_\a[\eta,\,\d_M]\cap \bigcup\{\dot C^{\bar\a}_{\d_N}\,:\,\bar\a\in\Xi^{q, \a}_{\d_N}\}=\emptyset$.
 \end{enumerate}

Indeed, by openness of the relevant sets $\d\setminus\dot C^{\bar\a}_\d$ (in the extension by $\mtcl P_{\bar\a}$) we can extend $q\av_\a$ to some $p_0\in\mtcl P_\a$ deciding some $\eta_0<\d_N$ such that $[\eta_0,\,\d_N]\cap\dot C^{\bar\a}_\d$ whenever $(\d_N, \bar\a)$ is $\mtcl G_q$-accessible from $(\d_N, \a)$ and there is $(\d, \bar\d)\in b^q_{\bar\a}$ such that $\bar\d<\d_N<\d$ (since there only finitely many such pairs $(\d_N, \bar\a)$). Then, by an instance of clause (7)(b) in the definition of condition, this time using the openness of the relevant (finitely many) sets of the form $\d_N\setminus\dot C^{\bar\a}_{\d_N}$, we may extend $p_0$ to some $p\in\mtcl P_\a$ for which there is some $M\in\mtcl N^p_\a\cap\mtcl T_{\a+1}\cap N$ and some $\eta_1<\d_M$ such that $A$, $\dot B$, $q\restriction N$, $\eta_0\in M$ and such that $p\Vdash_\a[\eta_1,\,\d_M]\cap \bigcup\{\dot C^{\bar\a}_{\d_N}\,:\,\bar\a\in\Xi^{q, \a}_{\d_N}\}=\emptyset$. Then, letting $\eta=\max\{\eta_0, \eta_1\}$, we get the desired conclusion.

%Let $G$ be a $\mtcl P_\a$-generic filter such that $p\in G$ and let us work in $M[G]$.\footnote{The excursion to $V[G]$ in this paragraph is actually not needed. One could also argue  without leaving in $V$ (as in the proof of (1)).} We may then find some $r^*\in A\cap M[G]$ such that $\dot B_G(\eta, r^*)$ is defined (since the existence of such a member of $A$ is witnessed by $r_0$, as in turn witnessed by $q$, and is expressible over $(H(\k)^{V[G]}; \in, H(\k)^V, G)$ by a sentence with the objects $\dot B$, and $\eta$ as parameters, all of which are in $M[G]$).  We note that $r^*\in M$ by (2) of the induction hypothesis for $\a$. Let $t^*=\dot B_G(\eta, r^*)$.
%Let also $p'$ be an extension of $p$ in $\mtcl P_\a$ forcing the above facts. We may assume that $p'$ is $Q$-saturated below $\a$ for every $Q\in\mtcl N^{p'}_0$.

By (2) of the induction hypothesis for $\a$ there is some $u\in M\cap\mtcl P_\a$, $r^*\in M\cap A$, and $t^*\in M\cap\mtcl P_\b$ such that $u$ is $\mtcl P_{\a^\dag}$-compatible with $p$ for some $\a^\dag\geq\a$ and $u$ forces in $\mtcl P_\a$ that $\dot B_{\dot G_\a}(\eta, r^*)$ is defined and $\dot B_{\dot G_\a}(\eta, r^*)=t^*$. This is true since, in the extension of $V$ by $\mtcl P_\a$, the existence of such a member of $A$ is witnessed by $r_0$, as in turn witnessed by $q$, and is expressible over $(H(\k)^{V[\dot G_\a]}; \in, H(\k)^V, \dot G_\a)$ by a sentence with the objects $\dot B$, and $\eta$ as parameters, all of which are in $M$). Let also $p'\in\mtcl P_\a$ be such that $p'\leq_{\a^\dag} p$ and $p'\leq_{\a^\dag} u$.
%an extension of $p$ in $\mtcl P_\a$ forcing the above facts. We may assume that $p'$ is $Q$-saturated below $\a$ for every $Q\in\mtcl N^{p'}_0$.

Let $\b^\dag$ be any ordinal such that $\b^\dag\geq\b$ and such that $\Psi_{N_0, N_1}(\r_0)<\b^\dag$ for every edge $\{(N_0, \r_0), (N_1, \r_1)\}\in\mtcl G_q$. We will now construct a condition in $\mtcl P_\b$ $\leq_\b$-extending $p'$, $t^*$ and $\leq_{\b^\dag}$-extending $q$.
For this, we let $q'=q\oplus p'$, $\mtcl G^*=\mtcl G_{q'}\oplus\mtcl G_{t^*}$, and let $F^*=\cl_{\mtcl G^*}(F_{q'}+F_{t^*})$. Let $q^*=(F^*, \mtcl G^*)$. We already know that $q^*\av_\a$ is a condition in $\mtcl P_\a$, and using this fact we will show that $q^*\in\mtcl P_\b$. It will then follow that $q^*\leq_\b r^*$ (by Lemma \ref{restr-extend}, since $t^*\leq_\b r^*$ and since clearly $q^*\restr N\leq_\b t^*$) and $q^*\leq_{\b^\dag} q$ (by $t^*\leq_\b q\restr N$ together with the fact that (5) above holds for $t^*$, the definition of $\mtcl G^*$ as $\mtcl G_{q'}\oplus\mtcl G_{t^*}$, and the definition of $F^*$ as $\cl_{\mtcl G^*}(F_{q'}+F_{t^*})$, and the choice of $\b^\dag$), which will finish the proof of the lemma in this case since $r^*\in N$.

Clause (1) in the definition of condition holds for $q^*$ by Lemma \ref{clause-1} noting that, by the choice of $t^*$, we are indeed under the hypotheses of this lemma. As usual (2) is trivial, (3) follows from the fact that $q^*\av_\a\in\mtcl P_\a$, and all subclauses of (4) except for (4)(b) are trivial. (4)(b) follows from our choice of $\eta$ and the fact that $t^*$ satisfies (5) with respect to $\eta$, together with Lemma \ref{transfer} and the induction hypothesis, and (5) follows from Lemma \ref{transfer}, the induction hypothesis, and the fact that for every $Q\in\mtcl N^q_\b$ such that $\d_Q<\d_N$ and every $\bar\a\in\Xi^{q^*, \a}_{\d_Q}$ there is  some $\a^\dag\in\Xi^{q^*, \a}_{\d_Q}\cap M$ such that $q ^*\Vdash_\a \dot C^{\bar\a}_{\d_Q}=\dot C^{\a^\dag}_{\d_Q}$---by arguments as in the verification of clause (5) for the amalgamation $q^*$ in the proof of part (1), using (2) of the induction hypothesis for $\a$ and for the relevant $\bar\a$.
Finally, (6) follows from the construction of $F^*$ as $\cl_{\mtcl G^*}(F_{q'}+F_{t^*})$, and (7) is verified in the same way as (5).

The argument when $\Xi^{q, \a}_{\d_N}=\emptyset$ is exactly the same, except that in the choice of $\eta$ we make sure that it satisfies (1) and (2) above, rather than (1)--(3). Also, in this case there is no need to argue in any $M\in N$; we can work in $N$ itself.

It remains to prove the lemma in the case that $\b$ is a limit ordinal. Let $\a\in N\cap \b$ be such that $\dom(F_q)\cap [\a, \b)\cap N=\emptyset$ and let $\b^\dag$ be defined in the same was as in the successor case.
%and let $G$ be $\mtcl P_\a$-generic and such that $q\av_\a\in G$. Now, working in $N[G]$,\footnote{Again, there is really no need to leave $V$, and we could in fact work in $N$ rather than in $N[G]$.}
 Using (1) of the induction hypothesis for $\a$, we may then find $r^*\in A\cap N$, $t^*\in\mtcl P_\b\cap N$, $p\in\mtcl P_\a$, and $\a^\dag\geq\a$ such that
\begin{enumerate}
\item $p\leq_\a t^*\av_\a$,
\item $t^*\leq_\b r^*$,
\item $t^*\leq_\b q\restr N$,
\item $p\leq_{\a^\dag} q\av_\a$, and
\item for every $Q\in\mtcl N^q_0\cap N$, $Q\cap\mtcl G_q=Q\cap\mtcl G_{t^*}$.
\end{enumerate}

%As in the argument in the successor case and since $N\in\mtcl N^{q\av_\a}_\a$, we have that $t^*\in N$ by the induction hypothesis applied to $\a$.
%Let $p\in\mtcl P_\a$ be a condition extending $q\av_\a$ and forcing the above.
%Once again, we may assume that $p$ is $Q$-saturated below $\a$ for every $Q\in\mtcl N^p_0$.
Finally, we amalgamate $p$, $q$ and $t^*$ into a condition $q^*\in\mtcl P_\b$ as in the successor case; specifically, we let $q'=q\oplus p$, $\mtcl G^*=\mtcl G_{q'}\oplus\mtcl G_{t^*}$, $F^*=\cl_{\mtcl G^*}(F_{q'}+F_{t^*})$, and $q^*=(F^*, \mtcl G^*)$. The verification that $q^*$ is a condition in $\mtcl P_\b$ such that $q^*\leq_\b t^*$ and $q^*\leq_{\b^\dag} q$ is contained in the corresponding proof in that case. Since $r^*\in N$, this concludes the proof in the present case, and hence the proof of the lemma.
\end{proof}

%\begin{corollary} For every $\b\leq\k$, $\mtcl P_\b$ is proper. \end{corollary}

%\begin{proof}
%Suppose $\b<\k$. Let $N^*\elsub H(\k^+)$ be a countable model such that $\Phi$, $\b\in N^*$ and let $q\in \mtcl P_\b\cap N^*$. It is enough to show that there is an extension $q^*\in \mtcl P_\b$ of $q$ which is $(N^*, \mtcl P_\b)$-generic in the standard sense. Let $N=N^*\cap H(\k)$. By Lemma \ref{properness1-0} there is an extension $q^*\in\mtcl P_\b$ such that $\{(N, \b)\}\in\mtcl G_{q^*}$. By Lemma \ref{properness2-0} (2) applied to $\b$, $q^*$ is $(N, \mtcl P_\b)$-generic in the present sense. But every maximal antichain $A\in N^*$ of $\mtcl P_\b$ is in $N^*\cap H(\k)=N$ by the $\al_2$-c.c.\ of $\mtcl P_\b$ and since $\k\geq\o_2$. Hence $q^*$ is $(N^*, \mtcl P_\b)$-generic in the standard sense.

%The proof in the case $\b=\k$ is basically the same, using again the $\al_2$-c.c.\ of $\mtcl P_\k$ -- which implies that every maximal antichain of $\mtcl P_\k$ is a maximal antichain of $\mtcl P_\a$ for some $\a<\k$ -- and the conclusion of the corollary for $\a<\k$.
%\end{proof}

\begin{corollary} $\mtcl P_\k$ is proper. \end{corollary}

\begin{proof}
Let $N^*\elsub H(\k^+)$ be a countable model such that $\Phi\in N^*$ and let $q\in \mtcl P_\k\cap N^*$. It is enough to show that there is an extension $q^*\in \mtcl P_\k$ of $q$ which is $(N^*, \mtcl P_\k)$-generic. Let $N=N^*\cap H(\k)$. By Lemma \ref{properness1-0+} there is an extension $q^*\in\mtcl P_\k$ of $q$ such that $\{(N, \b)\}\in\mtcl G_{q^*}$ for every $\b\in N\cap\k$. Let now $A\in N^*$ be a maximal antichain of $\mtcl P_\k$ and let $q'\in\mtcl P_\k$ be such that $q'\leq_\k q^*$. We will show that $q'$ is $\leq_\k$-compatible with a condition in $A\cap N$.

By the $\al_2$-c.c.\ of $\mtcl P_\k$ (i.e., case $\k$ of Lemma \ref{properness2-0}(1)) and $\cf(\k)\geq\o_2$, $A\in N$ and there is some ordinal $\b\in N$ such that $A$ is also a maximal antichain of $\mtcl P_\b$. Since $A$ is a maximal antichain of $\mtcl P_\k$ to begin with, we may assume, by picking $\b$ high enough, that $\dom(F_{q'})\setminus \b=\emptyset$.
By Lemma \ref{properness2-0}(2) applied to $\b$ there are then $r^*\in A\cap N$, $q^*\in \mtcl P_\b$ and $\b^\dag\geq\b$ such that $q^*\leq_\b r^*$ and $q^*\leq_{\b^\dag} q'\av_\b$. Let $\mtcl G_{**}=\mtcl G_{q^*}\oplus \mtcl G_{q'}$ and $F_{**}=\cl_{\mtcl G_{**}}(F_{q^*})$ and let $q^{**}=(F_{**}, \mtcl G_{**})$. Since $\dom(F_{q'})\sub\b$, it is then easy to show, by arguing as in the proof of Lemma \ref{properness2-0}, that $q^{**}$ is a condition in $\mtcl P_\k$ such that $q^{**}\leq_\k q'$. But now we are done since also $q^{**}\leq_\k r^*$.\end{proof}

\begin{remark} Our argument to prove properness does not work for $\b<\k$. In fact it may not be the case that $\mtcl P_\b$ be proper in general for $\b<\k$. \end{remark}

\subsection{New reals}\label{subsection-new-reals-0}

The following is proved in \cite{separating-fat}, Fact 2.6.

\begin{lemma}\label{new-reals} $\mtcl P_0$ adds $\al_1$-many Cohen reals. \end{lemma}

We will now use clause (6) in the definition of condition (and the closure of $\mtcl G_q$ under copying whenever $q$ is a condition) to prove
 Lemma \ref{fnr-0}, which
 is a counterpoint to Lemma \ref{new-reals}. Lemma \ref{fnr-0} shows that $\mathcal P_\k$ does not add more than $\aleph_1$-many new reals, and hence that this forcing preserves $\CH$ (cf.\ the proof of Proposition 2.7 in \cite{separating-fat} or the proof sketched in the introduction).

\begin{lemma}\label{fnr-0} (Few new reals) $\mathcal P_\k$ adds not more than $\aleph_1$-many new reals.
\end{lemma}

\begin{proof}
Suppose, towards a contradiction, that there is a $\mathcal P_\k$-condition $q$ and a sequence $(\dot r_\nu)_{\nu<\omega_2}$ of $\mathcal P_\k$-names for subsets of $\omega$ such that $$q\Vdash_{\kappa}\dot r_\nu \neq\dot r_{\nu'}$$ for all $\nu\neq \nu'$. We will find an extension $q^*$ of $q$ together with $\n_0\neq\n_1$ such that $q^*\Vdash_\k \dot r_{\n_0}=\dot r_{\n_1}$, which will be a contradiction.

By $\mtcl P_\k=\bigcup_{\b<\k}\mtcl P_\b$, we may fix $\b<\k$ such that $q\in\mtcl P_\b$.
Let $\nu<\o_2$ be given. By Lemma \ref{properness2-0}(1) and, again, the fact that $\mtcl P_\k=\bigcup_{\b<\k}\mtcl P_\b$, we may assume that $\dot r_\nu\in H(\k)$ and we may find $\b_\nu<\k$ above $\b$ and such that $\dot r_\nu$ is in fact a $\mtcl P_{\b_\nu}$-name for a subset of $\o$.

For each $\nu<\omega_2$ let $N^\ast_{\nu}\elsub H(\k^+)$ be countable and containing $q$, $\Phi$, $\dot r_\nu$, and $\b_{\nu}$,
and let $N_{\nu}=N^\ast_{\nu}\cap H(\kappa)$.

Using $\CH$ we may find $\n_0\neq \n_1$ in $\o_2$ such that $$(N_{\nu_0}; \in, q, \dot r_{\nu_0}, \{\b_{\n_0}\}, \Phi_{\b_{\n_0}+1})$$ and $$(N_{\nu_1}; \in, q, \dot r_{\nu_1},  \{\b_{\n_1}\}, \Phi_{\b_{\n_1}+1})$$ are isomorphic structures. In particular, $$e=\{(N_{\n_0},  \b_{\n_0}+1), (N_{\n_1},  \b_{\n_1}+1)\}$$ is then an edge.

Let us assume that $\b_{\n_0}\geq \b_{\n_1}$. By Lemma \ref{properness1-0} we may find an extension $q^*\in\mtcl P_{\b_{\n_0}}$ of $q$ such that $e\in \mtcl G_{q^*}$ and $F_{q^*}=F_q$.
Let now $q'\in\mtcl P_{\b_{\n_0}}$ be any extension of $q^*\av_{\b_{\n_0}}$ and suppose, towards a contradiction, that $q'\Vdash_{\b_{\n_0}} n\in \dot r_{\nu_0}\D\dot r_{\nu_1}$ for some $n<\o$. Let us assume that $q'\Vdash_{\b_{\n_0}} n\in \dot r_{\nu_0}\setminus \dot r_{\nu_1}$.

By Lemma \ref{properness2-0}(2), $q^\ast\av_{\b_{\n_0}}$ is potentially $(N_{\nu_0}, \mtcl P_{\b_{\n_0}})$-generic. Hence, there are $\b_{\n_0}^\dag\geq\b_{\n_0}$ and $q''\in\mtcl P_{\b_{\n_0}}$, $q''\leq_{\b_{\n_0}^\dag} q'$, such that $q''\leq_{\b_{\n_0}} p$ for some $p\in N_{\n_0}\cap\mtcl P_{\b_0}$
%by extending $q'$ if necessary we may assume that $q'$ extends a condition $p\in N_{\nu_0}\cap\mtcl P_{\b_{\n_0}}$
such that $p\Vdash_{\b_{\n_0}} n\in\dot r_{\nu_0}$.
%By extending $q''$ if necessary using Lemma \ref{properness2-0} we may assume that $q'$ is $Q$-saturated below $\b$ for each $Q\in\mtcl N^{q'}_0$. In particular,
We know that $(q''\av_{\b_{\n_0}})\restr N_{\n_0}\in\mtcl P_{\b_{\n_0}}$ (by Lemma \ref{restr-fine}) and $(q''\av_{\b_{\n_0}})\restr N_{\n_0}\leq_{\b_{\n_0}} p$ (by Lemma \ref{restr-extend}). We then have that $$(q''\av_{\b_{\n_0}})\restr N_{\n_0}\Vdash_{\b_{\n_0}} n\in \dot  r_{\nu_0},$$ and therefore $(q''\av_{\b_{\n_1}})\restr N_{\n_1}\in\mtcl P_{\b_{\n_1}}$ and $$(q''\av_{\b_{\n_1}})\restr N_{\n_1}\Vdash_{\b_{\n_1}} n\in \Psi_{N_{\n_0}, N_{\n_1}}(\dot r_{\n_0})$$ by Lemma \ref{transfer}.
Again by Lemmas \ref{restr-fine} and \ref{restr-extend}, we have that $q''\av_{\b_{\n_1}}\leq_{\b_{\n_1}} (q''\av_{\b_{\n_1}})\restr N_{\n_1}$, and therefore $q''\av_{\b_{\n_1}}\Vdash_{\b_{\n_1}}n\in \Psi_{N_{\nu_0}, N_{\nu_1}}(\dot r_{\nu_0})$.\footnote{Cf.\ the argument in the verification of clause (5) in the definition of condition for the amalgamation $q^*$ in the proof of $\aleph_2$-c.c.\ from Lemma \ref{properness2-0}.} But this yields a contradiction since $\Psi_{N_{\nu_0}, N_{\nu_1}}(\dot r_{\nu_0})=\dot r_{\nu_1}$.

The argument in the case that $q'\Vdash_{\b_{\n_0}} n\in \dot r_{\nu_1}\setminus \dot r_{\nu_0}$ is symmetrical to the proof in the previous case; in that case, we take $r\in  N_{\nu_0}\cap\mtcl P_{\b_{\n_0}}$ such that $r\Vdash_{\b_{\n_0}} n\notin\dot r_{\nu_0}$.\footnote{Compare this proof with the proof of Claim \ref{equality}.}
\end{proof}

Given $\a<\k$ and a $\mtcl P_{\k}$-generic filter $G$, let $$D^G_\a=\{\d_N\,:\, N\in\mtcl N^G_{\a+1}\}$$ Let also $\dot D_\a$ be a $\mtcl P_{\k}$-name for $D^G_\a$.

We now prove the other conclusion in Theorem \ref{mainthm-0} involving cardinal arithmetic.

\begin{lemma}\label{power-aleph1}
$\mtcl P_\k$ forces $2^{\aleph_1}=\k$.
\end{lemma}

\begin{proof}
In order to prove that $\Vdash_{\mtcl P_\k}2^{\aleph_1}\geq\k$, it suffices to show that $\mtcl P_\k$ forces that $\dot D_{\a_0}\setminus\dot D_{\a_1}\neq\emptyset$ for all $\a_0<\a_1$. For this, let $q$ be a $\mtcl P_\k$-condition, which we may assume is such that $\a_1\in\dom(F_q)$, and let $N\in [H(\k)]^{\al_0}$ be a sufficiently correct model such that $q\in N$. By the same argument as in the proof of Lemma \ref{properness1-0} we may find an extension $q'\in\mtcl P_\k$ of $q$ such that $N\in\mtcl N^{q'}_{\a_0+1}$ and $\mtcl N^{q'}_{\a_1+1}=\mtcl N^q_{\a_1+1}$. Let $\d<\d_N$ be above $\d_M$ for every $M\in\mtcl N^q_{\a_1+1}$ and let $q^*\in\mtcl P_\k$ be the extension of $q'$ resulting from adding $(\d, \d_N)$ to $d^{q'}_{\a_1}$. Then $q^*$ forces that $\d_N\in\dot D_{\a_0}\setminus\dot D_{\a_1}$.  Since $q\in\mtcl P_\k$ was arbitrary, this density lemma shows that $\mtcl P_\k$ forces $\dot D_{\a_0}\setminus\dot D_{\a_1}\neq\emptyset$.

Finally, a simple counting argument of nice $\mtcl P_\k$-names for subsets of $\o_1$ (s.\ \cite{KUNEN}) using the $\al_2$-chain condition of $\mtcl P_\k$ and the fact that $|\mtcl P_\k|^{\al_1}=\k^{\al_1}=\k$ shows that $\mtcl P_\k$ forces $2^{\al_1}\leq\k$.
\end{proof}

\subsection{Measuring}\label{subsection-measuring-0}

The following lemma completes the proof of Theorem \ref{mainthm-0}.

\begin{lemma}\label{measuring-0} $\mathcal P_\k$ forces $\Measuring$.
\end{lemma}

\begin{proof}
Let $G$ be $\mathcal P_\k$-generic and let $\vec C=(C_\delta\,:\,\delta\in\Lim(\omega_1))\in V[G]$ be a club-sequence on $\omega_1$. We want to see that there is a club of $\omega_1$ in $V[G]$ measuring $\vec C$.
By $\mtcl P_\k=\bigcup_{\a<\k}\mtcl P_\a$ together with the $\al_2$-c.c.\ of $\mtcl P_\k$, we may assume that, for some $\a_0<\k$, $\vec C=\dot C_G$ for some $\mtcl P_{\a_0}$-name $\dot C\in H(\k)$ for a club-sequence on $\o_1$. Again by the $\al_2$-c.c.\ of $\mtcl P_\k$ and the unboundedness of $\{\a\in\Succ(\k)\,:\,\Phi(\a)=\dot C\}$ in $\k$, we may fix some $\a>\a_0$ in $\Succ(\k)$ such that $\Phi(\a)=\dot C$. We then have that $\Phi(\a)$ is a $\mtcl P_\a$-name, and by Lemma \ref{compl-0} it is in fact a $\mtcl P_\a$-name for a club-sequence on $\o_1$.
Hence, we then have that $\vec C=\Phi(\a)_G$. We will see that $(\dot D_\a)_G$ is a club of $\o_1$ measuring $\vec C$.

First of all, it is easy to see that $\dot D_\a$ is forced to be unbounded in $\o_1$. In fact, given any condition $q\in\mtcl P_\k$ and any sufficiently correct countable $N\elsub H(\k)$ such that $q$, $\a\in N$, we may find by Lemma \ref{properness1-0} an extension $q^*\in\mtcl P_\k$ of $q$ such that $N\in \mtcl N^{q^*}_{\a+1}$, and every such condition forces that $\d_N\in \dot D_\a$.

\begin{claim}\label{closure} $D^G_\a$ is closed in $\o_1$.
\end{claim}

\begin{proof}
It suffices to prove that if $\d\in\Lim(\o_1)$ and $q\in\mtcl P_\k$ are such that $q$ forces $\d$ to be a limit point of $\dot D_\a$, then there is some $N\in\mtcl N^q_{\a+1}$ such that $\d_N=\d$.

Suppose, towards a contradiction, that $q\in\mtcl P_\k$ and $\d\in\Lim(\o_1)$ are such that $q$ forces $\d$ to be a limit point of $\dot D_\a$ but there is no $N\in\mtcl N^q_{\a+1}$ such that $\d_N=\d$.
 We may extend $q$ to a condition $q'$ obtained by adding $(\bar\d, \d)$ to $d^q_\a$, where $\bar\d<\d$ is above $\d_M$ for every $M\in\mtcl N^q_{\a+1}$ such that $\d_M<\d$, and taking copies under $\Psi_{N_0, N_1}$ as dictated by relevant edges $\{(N_0, \r_0), (N_1, \r_1)\ra\}\in\mtcl G_q$. But that yields a contradiction since then $q'$ forces, by clause (4)(d) in the definition of condition, that $\dot D_\a\cap \d$ is bounded by $\bar\d$.
\end{proof}

Given any $q\in G$ such that $\alpha\in \dom(F_q)$ and any limit point $\d\in D^G_\a$, if $(\d, \bar\d)\in b^q_\a$ for some $\bar\d<\d$, then $D^G_{\alpha}\cap (\bar\d,\, \d)$ is disjoint from $C_\d$. Hence, in order to finish the proof of the lemma it is enough to show that if $q\in G$ is such that $\a\in \dom(F_q)$, $N\in\mtcl N^q_{\a+1}$, and there is no $q'\in G$ extending $q$ and such that $\d_N\in \dom(b^{q'}_\a)$, then a tail of $D^G_\a$ is contained in $C_{\d_N}$.

So, let $q$ be a condition with $\a\in \dom(F_q)$ and let $N\in\mtcl N^q_{\a+1}$ be such that $\d_N\notin \dom(b^{q'}_\a)$ for any $q'\in\mtcl P_\k$ extending $q$. It suffices to find an extension $q^*$ of $q$ in $\mtcl P_\k$ and some $\delta <\d_N$ with the property that if $q'\in\mtcl P_\k$ extends $q^*$ and $M\in\mtcl N^{q'}_{\a+1}$ is such that $\delta <\d_M<\d_N$, then $q'\av_\a\Vdash_\a \d_M\in \dot C^\a_{\d_N}$.

We will assume that $\Xi^{q\av_{\a+1}, \a}_{\d_N}\neq\emptyset$---the proof in the case $\Xi^{q\av_{\a+1}, \a}_{\d_N}=\emptyset$ is a simpler version of the proof in this case. Let
$\a_0=\max(\Xi^{q, \a}_{\d_N})$, which is well-defined since $\emptyset\neq\Xi^{q\av_{\a+1}, \a}_{\d_N}\sub\Xi^{q, \a}_{\d_N}$.
 As usual, we may find a sequence $\vec{\mtcl E}=(\la (N^i_0, \r_0^i), (N^i_1, \r^1_i)\ra\,:\, i\leq n)$ such that $\la \a, \vec{\mtcl E}\ra$ is a connected $\mtcl G_q$-thread with $\min(\d_{\vec{\mtcl E}})=\d_N$, $\Psi_{\vec{\mtcl E}}(\a)=\a_0$, $N^0_0=N$, $N^n_1\in\mtcl N^q_{\a_0+1}$, and $\d_{N^n_1}=\d_N$.

\begin{claim}\label{cl22} There is some extension $q_0\in\mtcl P_\k$ of $q$ and some $a\in N$ such that $q_0$ forces in $\mtcl P_\k$ that if $M\in \mtcl N^{\dot G_\k}_{\a_0}\cap\mtcl T_{\a_0+1}\cap N^n_1$, $\Psi_{\vec{\mtcl E}}(a)\in M$, and $$\d_M\notin \bigcup\{\dot C^{\bar\a}_{\d_N}\,:\,\bar\a\in\Xi^{q, \a_0}_{\d_N}\},$$ then $\d_M\in \dot C^\a_{\d_N}$.
\end{claim}

\begin{proof}
Let us assume that the conclusion fails. Given any extension $q'$ of $q$ and any $a\in N$, by an instance of clause (7)(b) in the definition of condition for $q\av_{\a_0+1}$ together with Lemma \ref{compl-0}, there is some $q''\leq_\k q'$ and some $M\in\mtcl N^{q''}_{\a_0}\cap\mtcl T_{\a_0+1}\cap N^n_1$ such that $\Psi_{\vec{\mtcl E}}(a)\in M$ and  $$q''\av_{\a_0}\Vdash_{\a_0}\d_M\notin \bigcup\{\dot C^{\bar\a}_{\d_N}\,:\,\bar\a\in\Xi^{q, \a_0}_{\d_N}\}$$ By our assumption, we then have that $q''\av_{\a_0}\not\Vdash_{\a_0}\d_M\in \dot C^\a_{\d_N}$. Hence, every such $q''$ forces $\d_M\notin\dot C^\a_{\d_N}$.  We have thus seen that $q$ forces that for every $a\in N$ there is some $M\in \mtcl N^{\dot G_\k}_{\a_0}\cap\mtcl T_{\a_0+1}\cap N^n_1$ such that $\Psi_{\vec{\mtcl E}}(a)\in M$ and $$\d_M\notin  \bigcup\{\dot C^{\bar\a}_{\d_N}\,:\,\bar\a\in\Xi^{q, \a_0}_{\d_N}\}\cup\{\dot C^\a_{\d_N}\}$$

Let now $\bar\d<\d_N$ be above $\d_Q$ for every $Q\in \mtcl N^q_{\a+1}$ such that $\d_Q<\d_N$ and let $q^*$ be the result of adding $(\d_N, \bar\d)$ to $b^q_\a$ and closing under relevant isomorphisms $\Psi_{N_0, N_1}$. Then $q^*$ is a condition in $\mtcl P_\k$ extending $q$ (all clauses in the definition of condition except for (7)(b) are immediate, and (7)(b) follows from $\Xi^{q^*\av_{\a+1}, \a}_{\d_N}\setminus\{\a\}=\Xi^{q\av_{\a+1}, \a}_{\d_N}\sub\Xi^{q, \a}_{\d_N}$ and the property of $q$ we have just proved), which is a contradiction since $\d_N\in \dom(b^{q^*}_\a)$.
\end{proof}

Let $q_0$ and $a\in N$ be as in Claim \ref{cl22}. Let $\d<\d_N$ be above $\d_Q$ for every $Q\in\mtcl N^{q_0}_{\a+1}$ such that $\d_Q<\d_N$ and let $q^*$ be the extension obtained by adding the pair $(\d, a)$ to $d^{q_0}_\a$ and closing under relevant isomorphisms $\Psi_{N_0, N_1}$.

We now show that $q^*$ and $\d$ are as desired. For this, suppose $q'\in\mtcl P_\k$ extends $q^*$ and $M\in\mtcl N^{q'}_{\a+1}$ is such that $\d<\d_M<\d_N$. By an instance of (4)(d) in the definition of condition for $q'$, we then have some $M'\in\mtcl N^{q'}_{\a+1}$ such that $\d_{M'}=\d_M$ and $a\in M'$. By the shoulder axiom for $\mtcl N^{q'}_{\a+1}$ there is some $N'\in\mtcl N^{q'}_{\a+1}$ such that $\d_{N'}=\d_N$ and $M'\in N'$. Then $M''=\Psi_{N', N}(M')\in \mtcl N^{q'}_{\a+1}\cap N$ and $a\in M''$ since $\Psi_{N', N}(a)=a$ as $a\in N\cap N'$. Since $M''\in\mtcl N^{q'}_{\a+1}\cap N$, we then have of course that $$q'\av_\a\Vdash_\a\d_{M''}\notin \bigcup\{\dot C^{\bar\a}_{\d_N}\,:\,\bar\a\in \Xi^{q, \a_0}_{\d_N}\},\footnote{Note the presence in this expression of $\Xi^{q, \a_0}_{\d_N}$ rather than $ \Xi^{q', \a_0}_{\d_N}$ or $\Xi^{q'\av_{\a+1}, \a}_{\d_N}$.}$$ from which it follows by the choice of $a$ that $q'\av_\a\Vdash_\a\d_{M''}\in\dot C^\a_{\d_N}$. This finishes the proof since $\d_{M''}=\d_M$.
\end{proof}

\subsection{On adapting the construction of Theorem \ref{mainthm-0-intro} to other contexts}\label{conclusion}

It will be sensible to finish this section with some words addressing the issue of what goes wrong if we try to modify the present forcing so as to force $\CH$ together with $\Unif(\vec C)$, for some given ladder system $\vec C=(C_\delta\,:\,\delta\in\Lim(\omega_1))$---as we mentioned in the introduction, the conjunction of these two statements cannot hold. One could in fact try to build something like a sequence of partial orders $(\mathcal P_\beta)_{\beta\leq\k}$ in our construction in such a way that, at every stage $\alpha<\k$, we attempt to add a uniformizing function on $\vec C$ for some colouring $F:\Lim(\omega_1)\longrightarrow \{0, 1\}$ fed to us by our book-keeping function $\Phi$. Thus, rather than the present pairs $(b, d)$, we would plug in conditions for a natural forcing for adding such a uniformizing function with finite conditions.

Everything would seem to go well---and in particular our construction would have the $\aleph_2$-c.c., would be proper, and would preserve $\CH$---except that, because of the copying constraint expressed in the corresponding version of clause (6) in the definition of condition, it would not be able to force $\Unif(\vec C)$. The reason is that we would not be in a position to rule out situations in which there is a condition $q$ with, for example, an edge $\{(N_0, \r_0), (N_1, \r_1)\}$ in $\mtcl G_q$ for which there is some $\alpha\in N_0\cap \r_0$ such
that the colour of $\dot F(\alpha)$ at $\delta_{N_0}$ is forced to be, say, $0$, whereas the colour of $\dot F(\bar\alpha)$ at $\delta_{N_0}$ is forced to be $1$ (where $\bar\a=\Psi_{N_0, N_1}(\a)$ and where $\dot F(\xi)$ denotes of course the name for the colouring to be uniformized at stage $\xi$ of the construction). The requirement, imposed by the current version of clause (6), that any relevant amount of information below $\d_{N_0}$ on the generic uniformizing function at the coordinate $\alpha$ be copied over to the coordinate $\bar\alpha$ would then make it impossible for these generic uniformizing functions to be defined on any tail of $C_{\delta_{N_0}}$.
This type of problems does not arise when forcing $\Measuring$ due to the more lenient nature of the `guessing' in this case; if we cannot get the club to eventually stay outside  a given $C_\d$, then it has to eventually get inside (see the density argument in the proof of Lemma \ref{measuring-0}). The fact whether one or the other is the case is determined by the specific club-sequence being measured (and by the `shape' of the surrounding  condition, of course).

It may also be worth pointing out that the type of situation des\-cribed above is a source of serious obstacles towards trying to force any reasonable forcing axiom to hold together with $\CH$ using the present me\-thods. To see this in a particularly simple case, suppose, for example, that $(\mtcl Q_\b)_{\b\leq\k}$ is exactly as our present construction $(\mtcl P_\b)_{\b\leq\k}$, except that at each stage we force with Cohen forcing. This construction enjoys all relevant nice properties that $(\mtcl P_\b)_{\b\leq\k}$ has. On the other hand, $\mtcl Q_\k$
cannot possibly force $\FA_{\al_1}(\mbox{Cohen})$, as it preserves $\CH$. Letting $\a^*<\k$ be such that all reals in $V^{\mtcl Q_{\k}}$ have already appeared in $V^{\mtcl Q_{\a^*}}$, if $\a<\k$ is above $\a^*$, then the real constructed by the generic at the coordinate $\a$ will actually fail to be Cohen-generic over $V^{\mtcl Q_{\a^*}}$; in fact, for every condition $q\in\mtcl Q_\k$ such that $\a\in\dom(F_q)$ there will be a condition $q'$ extending $q$ for which there is connected $\mtcl G_{q'}$-thread $\la\a, \vec{\mtcl E}\ra$ such that $\bar\a:=\Psi_{\vec{\mtcl E}}(\a)<\a^*$.
 The information at the coordinate $\bar\a$ contained in any extension of $q'$ will then have to be  copied over into the coordinate $\a$, which in this si\-tu\-ation means that the real $r_\a$ constructed at the coordinate $\a$ is identical to the real at $\bar\a$, and this obviously prevents $r_\a$ from being Cohen-generic over $V^{\mtcl Q_{\a^*}}$.

\end{document}